\documentclass[a4paper,12pt]{amsart}
\addtolength{\textwidth}{7pc}
\addtolength{\textheight}{4.5pc}
\calclayout
\usepackage{amssymb}
\usepackage{amsmath}
\usepackage{amsfonts}
\usepackage[all]{xy}
\usepackage{mathabx}


\newtheorem{thm}{Theorem}[section]
\newtheorem{cor}[thm]{Corollary}
\newtheorem{lem}[thm]{Lemma}
\newtheorem{prop}[thm]{Proposition}
\theoremstyle{definition}
\newtheorem{defn}[thm]{Definition}
\theoremstyle{remark}

\numberwithin{equation}{section}

\newcommand{\bb}[1]{\mathbb{#1}}
\newcommand{\cl}[1]{\mathcal{#1}}


\begin{document}

\title{On maximal tensor products and quotient maps of operator systems}

\author{Kyung Hoon Han}

\address{Department of Mathematical Sciences, Seoul National University, San 56-1 ShinRimDong, KwanAk-Gu, Seoul
151-747, Korea}

\email{kyunghoon.han@gmail.com}

\subjclass[2000]{46L06, 46L07, 47L07}

\keywords{operator system, tensor product, quotient map,
unitization}

\date{}

\dedicatory{}

\commby{}


\begin{abstract}
We introduce quotient maps in the category of operator systems and
show that the maximal tensor product is projective with respect to
them. Whereas, the maximal tensor product is not injective, which
makes the $({\rm el},\max)$-nuclearity distinguish a class in the
category of operator systems. We generalize Lance's
characterization of $C^*$-algebras with the WEP by showing that
$({\rm el},\max)$-nuclearity is equivalent to the weak expectation
property. Applying Werner's unitization to the dual spaces of
operator systems, we consider a class of completely positive maps
associated with the maximal tensor product and establish the
duality between quotient maps and complete order embeddings.

\end{abstract}

\maketitle

\section{Introduction}

Kadison characterized the unital subspaces of a real continuous
function algebra on a compact set \cite{Ka}. As for its
noncommutative counterpart, Choi and Effros gave an abstract
characterization of the unital involutive subspaces of $B(H)$
\cite{CE}. The former is called a real function system or a real
ordered vector space with an Archimedean order unit while the
latter is termed an operator system. Ever since the work by Choi
and Effros, the notion of operator systems has been a useful tool
in studying the local structures and the functorial aspects of
$C^*$-algebras.

Recently, the fundamental and systematic developments in the
theory of operator systems have been carried out through a series
of papers \cite{PT, PTT, KPTT1, KPTT2}. Perhaps, the reference
\cite{PT} is the cornerstone for this program. Under the naive
definition of the quotient and that of the tensor product of real
function systems, the order unit sometimes fails to be
Archimedean. See for example, \cite[p.67]{A} and \cite[Remark
3.12]{PTT}. The Archimedeanization process introduced in \cite{PT}
helps remedy this problem. In order to apply this idea to the
noncommutative situation, the Archimedeanization of a matrix
ordered $*$-vector space with a matrix order unit is introduced in
\cite{PTT}. Based on the matricial Archimedeanization process, the
tensor products and the quotients of operator systems are defined
and studied in \cite{KPTT1} and \cite{KPTT2} respectively.

Based on these developments, a simple and generalized proof of the
celebrated Choi-Effros-Kirchberg approximation theorem for nuclear
$C^*$-algebras has been given in \cite{HP}.

In this paper, we continue to study the tensor products in
\cite{KPTT1} and the quotients in \cite{KPTT2}. With the same
spirit as in \cite{HP}, we give a simple and generalized proof of
the classical Lance's theorem \cite{L1,L2}.

In section 3, we introduce the notion of {\sl complete order
quotient maps} which can be regarded as quotient maps in the
category of operator systems and show that the maximal tensor
product is projective under this definition.

The minimal tensor product is injective functorially, while the
maximal tensor product need not be injective. This misbehavior
distinguishes a class which is called $({\rm el},\max)$-nuclear
operator systems \cite{KPTT2}. In the category of $C^*$-algebras,
Lance characterized this tensorial property by the factorization
property for the inclusion map into the second dual through
$B(H)$. In \cite{KPTT2}, it is proved that this factorization
property for an operator system implies its $({\rm
el},\max)$-nuclearity and it is asked whether the converse holds.
In section 4, we answer this question in the affirmative and
deduce Lance's theorem as a corollary.

The order unit of the dual spaces of operator systems cannot be
considered in general. However, the dual spaces of finite
dimensional operator systems have a non-canonical Archimedean
order unit \cite{CE}. This enables the duality between tensor
products and mapping spaces to work in the proofs of the
Choi-Effros-Kirchberg theorem for operator systems \cite{HP} and
Lance's theorem for operator systems in section 4. Not only is the
finite dimensional assumption restrictive, but also the matrix
order unit norm on the dual spaces of finite dimensional operator
systems is irrelevant to the matrix norm given by the standard
dual of operator spaces.

To get rid of the finite dimensional assumption and to reflect the
operator space dual norm, we apply Werner's unitization of matrix
ordered operator spaces~\cite{W} to the dual spaces of operator
systems. We consider the completely positive maps associated with
the maximal tensor products and prove their factorization property
in section 5. Finally, we establish the duality between complete
order quotient maps and complete order embeddings in section 6.

\section{preliminaries}

Let $\cl S$ and $\cl T$ be operator systems. As in \cite{KPTT1},
an operator system structure on $\cl S \otimes \cl T$ is defined
as a family of cones $M_n (\cl S \otimes_\tau \cl T)^+$
satisfying:
\begin{enumerate}
\item[(T1)] $(\cl S \otimes \cl T, \{ M_n (\cl S \otimes_\tau \cl
T)^+ \}_{n=1}^\infty, 1_{\cl S} \otimes1_{\cl T})$ is an operator
system denoted by $\cl S \otimes_\tau \cl T$, \item[(T2)] $M_n(\cl
S)^+ \otimes M_m(\cl T)^+ \subset M_{mn} (\cl S \otimes_\tau \cl
T)^+$ for all $n,m \in \mathbb N$, and \item[(T3)] If $\varphi :
\cl S \to M_n$ and $\psi : \cl T \to M_m$ are unital completely
positive maps, then $\varphi \otimes \psi : \cl S \otimes_\tau \cl
T \to M_{mn}$ is a unital completely positive map.
\end{enumerate}
By an operator system tensor product, we mean a mapping $\tau :
\cl O \times \cl O \to \cl O$, such that for every pair of
operator systems $\cl S$ and $\cl T$, $\tau (\cl S, \cl T)$ is an
operator system structure on $\cl S \otimes \cl T$, denoted $\cl S
\otimes_\tau \cl T$. We call an operator system tensor product
$\tau$ functorial, if the following property is satisfied:
\begin{enumerate}
\item[(T4)] For any operator systems $\cl S_1, \cl S_2, \cl T_1,
\cl T_2$ and unital completely positive maps $\varphi : \cl S_1
\to \cl T_1, \psi : \cl S_2 \to \cl T_2$, the map $\varphi \otimes
\psi : \cl S_1 \otimes \cl S_2 \to \cl T_1 \otimes \cl T_2$ is
unital completely positive.
\end{enumerate}
An operator system structure is defined on two fixed operator
systems, while the functorial operator system tensor product can
be thought of as the bifunctor on the category consisting of
operator systems and unital completely positive maps.

For operator systems $\cl S$ and $\cl T$, we put
$$M_n(\cl S \otimes_{\min} \cl T)^+ = \{ [p_{i,j}]_{i,j} \in
M_n(\cl S \otimes \cl T) : \forall \varphi \in S_k(\cl S), \psi
\in S_m(\cl T), [(\varphi \otimes \psi)(p_{i,j})]_{i,j} \in
M_{nkm}^+ \}$$ and let $\iota_{\cl S} : \cl S \to B(H)$ and
$\iota_{\cl T} : \cl T \to B(K)$ be unital completely order
isomorphic embeddings. Then the family $\{ M_n(\cl S
\otimes_{\min} \cl T)^+ \}_{n=1}^\infty$ is an operator system
structure on $\cl S \otimes \cl T$ rising from the embedding
$\iota_{\cl S} \otimes \iota_{\cl T} : \cl S \otimes \cl T \to B(H
\otimes K)$. We call the operator system $(\cl S \otimes \cl T, \{
M_n(\cl S \otimes_{\min} \cl T) \}_{n=1}^\infty, 1_{\cl S} \otimes
1_{\cl T})$ the minimal tensor product of $\cl S$ and $\cl T$ and
denote it by $\cl S \otimes_{\min} \cl T$.

The mapping $\min : \cl O \times \cl O \to \cl O$ sending $(\cl S,
\cl T)$ to $\cl S \otimes_{\min} \cl T$ is an injective,
associative, symmetric and functorial operator system tensor
product. The positive cone of the minimal tensor product is the
largest among all possible positive cones of operator system
tensor products \cite[Theorem 4.6]{KPTT1}. For $C^*$-algebras $\cl
A$ and $\cl B$, we have the completely order isomorphic inclusion
$$\cl A \otimes_{\min} \cl B \subset \cl A
\otimes_{\rm C^*\min} \cl B$$ \cite[Corollary 4.10]{KPTT1}.

For operator systems $\cl S$ and $\cl T$, we put
$$D_n^{\max}(\cl S, \cl T) = \{ \alpha(P \otimes Q)
\alpha^* : P \in M_k(\cl S)^+, Q \in M_l(\cl T)^+, \alpha \in
M_{n,kl},\ k,l \in \mathbb N \}.$$ Then it is a matrix ordering on
$\cl S \otimes \cl T$ with order unit $1_{\cl S} \otimes 1_{\cl
T}$. Let $\{ M_n(\cl S \otimes_{\max} \cl T)^+ \}_{n=1}^\infty$ be
the Archimedeanization of the matrix ordering $\{ D_n^{\max}(\cl
S, \cl T) \}_{n=1}^\infty$. Then it can be written as
$$M_n(\cl S \otimes_{\max} \cl T)^+ = \{ X \in M_n(\cl S
\otimes \cl T) : \forall \varepsilon>0, X+\varepsilon I_n \in
D_n^{\max}(\cl S, \cl T) \}.$$ We call the operator system $(\cl S
\otimes \cl T, \{ M_n(\cl S \otimes_{\max} \cl T)^+
\}_{n=1}^\infty, 1_{\cl S} \otimes 1_{\cl T})$ the maximal
operator system tensor product of $\cl S$ and $\cl T$ and denote
it by $\cl S \otimes_{\max} \cl T$.

The mapping $\max : \cl O \times \cl O \to \cl O$ sending $(\cl S,
\cl T)$ to $\cl S \otimes_{\max} \cl T$ is an associative,
symmetric and functorial operator system tensor product. The
positive cone of the maximal tensor product is the smallest among
all possible positive cones of operator system tensor products
\cite[Theorem 5.5]{KPTT1}. For $C^*$-algebras $\cl A$ and $\cl B$,
we have the completely order isomorphic inclusion
$$\cl A \otimes_{\max} \cl B \subset \cl A
\otimes_{\rm C^*\max} \cl B$$ \cite[Theorem 5.12]{KPTT1}.

The inclusion $\cl S \otimes \cl T \subset I(\cl S) \otimes_{\max}
\cl T$ for the injective envelope $I(\cl S)$ of $\cl S$ induces
the operator system structure on $\cl S \otimes \cl T$, which is
denoted by $\cl S \otimes_{\rm el} \cl T$. Here the injective
envelope $I(\cl S)$ can be replaced by any injective operator
system containing $\cl S$. The mapping ${\rm el} : \cl O \times
\cl O \to \cl O$ sending $(\cl S, \cl T)$ to $\cl S \otimes_{\rm
el} \cl T$ is a left injective functorial operator system tensor
product \cite[Theorems 7.3, 7.5]{KPTT1}.

An operator system $\cl S$ is called $({\rm el},\max)$-nuclear if
$\cl S \otimes_{\rm el} \cl T = \cl S \otimes_{\max} \cl T$ for
any operator system $\cl T$. An operator system $\cl S$ is $({\rm
el},\max)$-nuclear if and only if $\cl S \otimes_{\max} \cl T
\subset \cl S_2 \otimes_{\max} \cl T$ for any inclusion $\cl S
\subset \cl S_2$ and any operator system $\cl T$ \cite[Lemma
6.1]{KPTT2}. We say that the operator system $\cl S$ has the weak
expectation property (in short, WEP) if the inclusion map $\iota :
\cl S \hookrightarrow \cl S^{**}$ can be factorized as $\Psi \circ
\Phi = \iota$ for unital completely positive maps $\Phi : \cl S
\to B(H)$ and $\Psi : B(H) \to \cl S^{**}$. If $\cl S$ has the
WEP, then it is $({\rm el},\max)$-nuclear \cite[Theorem
6.7]{KPTT2}.

Given an operator system $\cl S$, we call $\cl J \subset \cl S$ a
kernel, provided that it is the kernel of a unital completely
positive map from $\cl S$ to another operator system. The kernel
can be characterized in an intrinsic way: $\cl J$ is a kernel if
and only if it is the intersection of a closed two-sided ideal in
$C^*_u(\cl S)$ with $\cl S$ \cite[Corollary 3.8]{KPTT2}. If we
define a family of positive cones $M_n(\cl S \slash \cl J)^+$ on
$M_n(\cl S \slash \cl J)$ by
$$M_n(\cl S \slash \cl J)^+ = \{ [x_{i,j}+J]_{i,j} :
\forall \varepsilon >0, \exists k_{i,j} \in J,\ \varepsilon I_n
\otimes 1_{\cl S} + [x_{i,j}+k_{i,j}]_{i,j} \in M_n(\cl S)^+ \},$$
then $(\cl S \slash \cl J, \{ M_n(\cl S \slash \cl J)^+
\}_{n=1}^\infty, 1_{\cl S \slash \cl J}) $ satisfies all the
conditions of an operator system \cite[Proposition 3.4]{KPTT2}. We
call it the quotient operator system. With this definition, the
first isomorphism theorem is proved: If $\varphi : \cl S \to \cl
T$ is a unital completely positive map with $\cl J \subset \ker
\varphi$, then the map $\widetilde \varphi : \cl S \slash \cl J
\to \cl T$ given by $\widetilde \varphi (x+\cl J)=\varphi(x)$ is a
unital completely positive map \cite[Proposition 3.6]{KPTT2}.

Since the kernel $\cl J$ in an operator system $\cl S$ is a closed
subspace, the operator space structure of $\cl S \slash \cl J$ can
be interpreted in two ways, one as the operator space quotient and
the other as the operator space structure induced by the operator
system quotient. The two matrix norms can be different. For a
concrete example, see \cite[Example 4.4]{KPTT2}.

\section{Projectivity of maximal tensor product}

We show that the maximal tensor product is projective functorially
in the category of operator systems. To this end, we first define
the quotient maps in the category of operator systems.

\begin{defn}
For operator systems $\cl S$ and $\cl T$, we let $\Phi : \cl S \to
\cl T$ be a unital completely positive surjection. We call $\Phi :
\cl S \to \cl T$ {\sl a complete order quotient map} if for any
$Q$ in $M_n(\cl T)^+$ and $\varepsilon>0$, we can take an element
$P$ in $M_n(\cl S)$ so that it satisfies
$$P + \varepsilon I_n \otimes 1_{\cl S} \in M_n(\cl S)^+
\quad \text{and} \quad \Phi_n(P) = Q.$$
\end{defn}

The key point of the above definition is that the lifting $P$
depends on the choice of $\varepsilon>0$. A slight modification of
\cite[Theorem 2.45]{PT} implies the following proposition that
justifies the above terminology.

\begin{prop}\label{first}
For operator systems $\cl S$ and $\cl T$, we suppose that $\Phi :
\cl S \to \cl T$ is a unital completely positive surjection. Then
$\Phi : \cl S \to \cl T$ is a complete order quotient map if and
only if the induced map $\widetilde{\Phi} : \cl S \slash \ker \Phi
\to \cl T$ is a unital complete order isomorphism.
\end{prop}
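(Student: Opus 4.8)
The plan is to exploit the first isomorphism theorem together with the explicit description of the quotient cones. Since $\cl J = \ker \Phi$, the induced map $\widetilde\Phi : \cl S \slash \cl J \to \cl T$, $\widetilde\Phi(x+\cl J) = \Phi(x)$, is a well-defined unital completely positive map by \cite[Proposition 3.6]{KPTT2}, and it is a linear bijection: surjectivity is inherited from $\Phi$, while injectivity follows from $\cl J = \ker\Phi$. Consequently $\widetilde\Phi$ is a unital complete order isomorphism precisely when its inverse is completely positive, that is, precisely when $\widetilde\Phi_n$ carries $M_n(\cl S \slash \cl J)^+$ onto all of $M_n(\cl T)^+$ for every $n$. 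Both implications therefore come down to matching the defining condition of a complete order quotient map with the defining condition of the quotient cone, and the crucial point is that the parameter $\varepsilon>0$ plays the same role in each.

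For the forward implication, I would assume $\Phi$ is a complete order quotient map and fix $Q \in M_n(\cl T)^+$; the goal is to show that the unique preimage $Z = \widetilde\Phi_n^{-1}(Q) \in M_n(\cl S \slash \cl J)$ lies in $M_n(\cl S \slash \cl J)^+$. Writing $Z = [x_{i,j}+\cl J]_{i,j}$ and fixing $\varepsilon>0$, the quotient-map hypothesis supplies $P = [p_{i,j}]_{i,j} \in M_n(\cl S)$ with $P + \varepsilon I_n \otimes 1_{\cl S} \in M_n(\cl S)^+$ and $\Phi_n(P) = Q$. Since $\widetilde\Phi_n([p_{i,j}+\cl J]_{i,j}) = \Phi_n(P) = Q = \widetilde\Phi_n(Z)$ and $\widetilde\Phi_n$ is injective, one gets $p_{i,j}-x_{i,j} \in \cl J$; setting $k_{i,j} = p_{i,j}-x_{i,j}$ then yields $\varepsilon I_n \otimes 1_{\cl S} + [x_{i,j}+k_{i,j}]_{i,j} = P + \varepsilon I_n \otimes 1_{\cl S} \in M_n(\cl S)^+$. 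As $\varepsilon$ was arbitrary, this is exactly the condition for $Z \in M_n(\cl S \slash \cl J)^+$, so $\widetilde\Phi^{-1}$ is completely positive.

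For the converse, I would assume $\widetilde\Phi$ is a unital complete order isomorphism and fix $Q \in M_n(\cl T)^+$ and $\varepsilon>0$. Then $Z = \widetilde\Phi_n^{-1}(Q) \in M_n(\cl S \slash \cl J)^+$; writing $Z = [x_{i,j}+\cl J]_{i,j}$, the definition of the quotient cone yields, for this same $\varepsilon$, elements $k_{i,j} \in \cl J$ with $\varepsilon I_n \otimes 1_{\cl S} + [x_{i,j}+k_{i,j}]_{i,j} \in M_n(\cl S)^+$. Putting $P = [x_{i,j}+k_{i,j}]_{i,j}$, one has $P + \varepsilon I_n \otimes 1_{\cl S} \in M_n(\cl S)^+$, and since $k_{i,j} \in \cl J = \ker\Phi$ we have $\Phi_n(P) = [\Phi(x_{i,j})]_{i,j} = \widetilde\Phi_n(Z) = Q$, which exhibits the required lift.

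The only delicate point, and what I expect to be the main obstacle conceptually rather than technically, is the bookkeeping of the parameter $\varepsilon$: I do not expect to produce a genuine positive lift of $Q$, only lifts that become positive after an arbitrarily small perturbation, and it is essential that the $\varepsilon$ in the quotient-map condition be the very same $\varepsilon$ used in the Archimedeanized quotient cone. This is precisely the feature flagged after the definition, namely that $P$ is permitted to depend on $\varepsilon$, and it is what makes the two $\varepsilon$-quantifiers align. Beyond this, the argument is a direct unwinding of definitions, with unitality and complete positivity of $\widetilde\Phi$ already guaranteed by the first isomorphism theorem.
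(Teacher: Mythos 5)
Your proof is correct and follows essentially the same route as the paper's: both arguments are a direct unwinding of the definitions, matching the $\varepsilon$-quantifier in the complete order quotient map condition with the $\varepsilon$ appearing in the Archimedeanized quotient cone $M_n(\cl S \slash \ker \Phi)^+$, with the lift allowed to vary with $\varepsilon$. The paper phrases this as a chain of three equivalences while you route it through injectivity of $\widetilde\Phi_n$ and its inverse, but the substance is identical.
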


\begin{proof}
$\Phi : \cl S \to \cl T$ is a complete order quotient map
\begin{enumerate} \item[$\Leftrightarrow$] $\forall Q \in M_n(\cl T)^+,
\forall \varepsilon>0, \exists P \in M_n(\cl S), P + \varepsilon
I_n \otimes 1_{\cl S} \in M_n(\cl S)^+$ and $\Phi_n(P)=Q$
\item[$\Leftrightarrow$] $\forall Q \in M_n(\cl T)^+, \exists P
\in M_n(\cl S), P + \ker \Phi_n \in M_n(\cl S \slash \ker \Phi)^+$
and $\widetilde \Phi_n(P + \ker \Phi_n)=Q$
\item[$\Leftrightarrow$] the induced map ${\widetilde \Phi} : \cl
S \slash \ker \Phi \to \cl T$ is a complete order isomorphism.
\end{enumerate}
\end{proof}

Recall that for operator spaces $V$ and $W$, the linear map $\Phi
: V \to W$ is called a complete quotient map if $\Phi_n : M_n(V)
\to M_n(W)$ is a quotient map for each $n \in \mathbb N$, that is,
$\Phi_n$ maps the open unit ball of $M_n(V)$ onto the open unit
ball of $M_n(W)$.

\begin{prop}\label{quotient map}
For operator systems $\cl S$ and $\cl T$, we suppose that $\Phi :
\cl S \to \cl T$ is a unital completely positive surjection. If
$\Phi : \cl S \to \cl T$ is a complete quotient map, then it is a
complete order quotient map.
\end{prop}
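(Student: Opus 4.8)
The plan is to use the operator space complete quotient map property to produce a norm-controlled lift of $Q$ and then to convert that operator space norm estimate into an operator system order estimate via the order unit norm on $M_n(\cl S)$. We may assume $Q \neq 0$, since for $Q=0$ the choice $P=0$ already gives $\Phi_n(P)=Q$ and $P+\varepsilon(I_n \otimes 1_{\cl S}) \in M_n(\cl S)^+$. Write $r=\|Q\|$. Since $0 \le Q \le r(I_n \otimes 1_{\cl T})$, the self-adjoint element $w = I_n \otimes 1_{\cl T} - \tfrac1r Q$ satisfies $0 \le w \le I_n \otimes 1_{\cl T}$, and hence $\|w\| \le 1$ in $M_n(\cl T)$.

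First I would lift $w$ approximately. Because $\Phi_n$ carries the open unit ball of $M_n(\cl S)$ onto the open unit ball of $M_n(\cl T)$, rescaling $w$ by a factor slightly below $1$ and lifting the rescaled element shows that for every $\delta>0$ there is $v \in M_n(\cl S)$ with $\Phi_n(v)=w$ and $\|v\|<1+\delta$. Replacing $v$ by $\tfrac12(v+v^*)$ and using that $\Phi_n$ is self-adjoint (being completely positive) while $w=w^*$, I may assume $v=v^*$ without increasing the norm bound. Now set $P = r\,(I_n \otimes 1_{\cl S} - v)$. Since $\Phi$ is unital we have $\Phi_n(I_n \otimes 1_{\cl S}) = I_n \otimes 1_{\cl T}$, so $\Phi_n(P) = r\big(I_n \otimes 1_{\cl T} - w\big) = Q$.

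It remains to verify the order estimate. For the self-adjoint element $v$, the order unit norm description of the matrix norm on $M_n(\cl S)$ gives $v \le \|v\|\,(I_n \otimes 1_{\cl S}) < (1+\delta)(I_n \otimes 1_{\cl S})$, whence $P = r(I_n \otimes 1_{\cl S}) - rv \ge -r\delta\,(I_n \otimes 1_{\cl S})$, i.e. $P + r\delta\,(I_n \otimes 1_{\cl S}) \in M_n(\cl S)^+$. Choosing $\delta \le \varepsilon/r$ then yields $P + \varepsilon\,(I_n \otimes 1_{\cl S}) \in M_n(\cl S)^+$ together with $\Phi_n(P)=Q$, which is exactly the complete order quotient condition of the definition. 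The only genuinely delicate points are the two translations between the two categories: extracting a \emph{self-adjoint}, norm-controlled lift from the operator space open-ball surjectivity, and passing from the norm bound $\|v\|<1+\delta$ to the one-sided order bound through the order unit norm. Everything else is bookkeeping with the unitality of $\Phi$ and the normalization by $r=\|Q\|$.
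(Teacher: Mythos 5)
Your proof is correct and follows essentially the same route as the paper's: recenter $Q$ relative to the order unit to obtain a self-adjoint element of controlled norm, lift it with a norm bound via the open-ball surjectivity of $\Phi_n$, symmetrize the lift, and convert the norm bound into the one-sided order bound using the order-unit norm on $M_n(\cl S)_{sa}$. The only cosmetic difference is the normalization (you shift by $\|Q\|$ via $w = I_n\otimes 1_{\cl T} - \tfrac{1}{\|Q\|}Q$, the paper shifts by $\tfrac12\|Q\|$), which changes nothing of substance.
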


\begin{proof}
We choose $Q \in M_n(\cl T)^+$ and $\varepsilon >0$. We then have
$$-{1 \over 2} \|Q\| I_n \otimes 1_{\cl T} \le Q - {1 \over 2}\|Q\| I_n \otimes 1_{\cl T} \le {1 \over
2}\|Q\| I_n \otimes 1_{\cl T}.$$ There exists an element $P$ in
$M_n(\cl S)$ such that $$\Phi_n (P) = Q - {1 \over 2} \|Q\|I_n
\otimes 1_{\cl T} \qquad \text{and} \qquad \|P\| \le {1 \over
2}\|Q\| + \varepsilon.$$ By considering $(P+P^*)\slash 2$ instead,
we may assume that $P$ is self-adjoint. It follows that
$$P+{1 \over 2}\|Q\| I_n \otimes 1_{\cl S} + \varepsilon I_n \otimes 1_{\cl S} \in M_n(\cl S)^+ \qquad
\text{and} \qquad \Phi_n (P+ {1 \over 2}\|Q\| I_n \otimes 1_{\cl
S})=Q.$$
\end{proof}

The following theorem says that the maximal tensor product is
projective functorially in the category of operator systems.

\begin{thm}\label{projective}
For operator systems $\cl S_1, \cl S_2, \cl T$ and a complete
order quotient map $\Phi : \cl S_1 \to \cl S_2$, the linear map
$\Phi \otimes {\rm id}_{\cl T} : {\cl S}_1 \otimes_{\max} \cl T
\to \cl S_2 \otimes_{\max} \cl T$ is a complete order quotient
map.
\end{thm}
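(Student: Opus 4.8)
The plan is to verify directly the lifting condition in the definition of a complete order quotient map. First I would observe that $\Phi \otimes {\rm id}_{\cl T}$ is a unital completely positive surjection: it is unital completely positive because $\max$ is functorial (property (T4)), and it is surjective because $\Phi$ is surjective and the underlying space of each maximal tensor product is the algebraic tensor product. So it remains to show that for every $Q \in M_n(\cl S_2 \otimes_{\max}\cl T)^+$ and every $\varepsilon > 0$ there is a self-adjoint $P \in M_n(\cl S_1 \otimes \cl T)$ with $(\Phi\otimes{\rm id}_{\cl T})_n(P) = Q$ and $P + \varepsilon\, I_n \otimes 1_{\cl S_1\otimes\cl T} \in M_n(\cl S_1\otimes_{\max}\cl T)^+$.

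The construction starts from the description of the maximal cone. Fixing $\delta = \varepsilon/2$, positivity of $Q$ gives $Q + \delta\,I_n\otimes 1_{\cl S_2\otimes\cl T} \in D_n^{\max}(\cl S_2,\cl T)$, so I can write it as $\alpha(R\otimes S)\alpha^*$ with $R \in M_k(\cl S_2)^+$, $S\in M_l(\cl T)^+$ and $\alpha\in M_{n,kl}$. I would then \emph{freeze this decomposition}, so that $\|S\|$, $\|\alpha\|$, $k$ and $l$ become fixed constants, and only afterwards choose $\eta > 0$ small enough that $\eta\|S\|\|\alpha\|^2 \le \varepsilon/2$. Applying the hypothesis that $\Phi$ is a complete order quotient map to $R\in M_k(\cl S_2)^+$ with tolerance $\eta$, I obtain a self-adjoint lift $\widetilde R \in M_k(\cl S_1)$ with $\widetilde R + \eta\,I_k\otimes 1_{\cl S_1}\in M_k(\cl S_1)^+$ and $\Phi_k(\widetilde R)=R$. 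The candidate lift is
\[
P = \alpha(\widetilde R\otimes S)\alpha^* - \delta\,I_n\otimes 1_{\cl S_1\otimes\cl T}.
\]
That $(\Phi\otimes{\rm id}_{\cl T})_n(P)=Q$ is then immediate from $\Phi_k(\widetilde R)=R$, unitality of $\Phi$, and $\alpha(R\otimes S)\alpha^* = Q + \delta\,I_n\otimes 1_{\cl S_2\otimes\cl T}$.

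The heart of the proof, and the step I expect to be the main obstacle, is controlling the positivity defect introduced by the inexact lift. Writing $\alpha((\widetilde R + \eta\,I_k\otimes 1_{\cl S_1})\otimes S)\alpha^* \in D_n^{\max}(\cl S_1,\cl T)$ and expanding, I get
\[
\alpha(\widetilde R\otimes S)\alpha^* = \alpha((\widetilde R + \eta\,I_k\otimes 1_{\cl S_1})\otimes S)\alpha^* - \eta\,\alpha((I_k\otimes 1_{\cl S_1})\otimes S)\alpha^*,
\]
so the task reduces to bounding the subtracted term above in the maximal order. Using $\|S\|\,I_l\otimes 1_{\cl T} - S \in M_l(\cl T)^+$, then $I_k\otimes 1_{\cl S_1}\in M_k(\cl S_1)^+$, and finally $\|\alpha\|^2 I_n - \alpha\alpha^* \ge 0$, I would establish the chain
\[
\eta\,\alpha((I_k\otimes 1_{\cl S_1})\otimes S)\alpha^* \le \eta\|S\|\,(\alpha\alpha^*)\otimes 1_{\cl S_1\otimes\cl T} \le \eta\|S\|\|\alpha\|^2\,I_n\otimes 1_{\cl S_1\otimes\cl T},
\]
each inequality being witnessed by an explicit element of $D_n^{\max}(\cl S_1,\cl T)$, so that the whole chain lives inside $M_n(\cl S_1\otimes_{\max}\cl T)^+$. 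Combining these yields $\alpha(\widetilde R\otimes S)\alpha^* + \eta\|S\|\|\alpha\|^2\,I_n\otimes 1_{\cl S_1\otimes\cl T}\in M_n(\cl S_1\otimes_{\max}\cl T)^+$, whence
\[
P + \varepsilon\,I_n\otimes 1_{\cl S_1\otimes\cl T} = \alpha(\widetilde R\otimes S)\alpha^* + (\varepsilon-\delta)\,I_n\otimes 1_{\cl S_1\otimes\cl T}
\]
is positive because $\varepsilon - \delta = \varepsilon/2 \ge \eta\|S\|\|\alpha\|^2$. The delicate point throughout is the order of quantifiers: the constants $\|S\|$ and $\|\alpha\|$ depend on the $D_n^{\max}$-decomposition of $Q+\delta\,I_n\otimes 1_{\cl S_2\otimes\cl T}$, so $\eta$ must be chosen only after that decomposition is fixed. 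Keeping this bookkeeping straight, together with the routine index-shuffling identifications between $M_k(\cl S_1)\otimes M_l(\cl T)$ and $M_{kl}(\cl S_1\otimes\cl T)$, is where the care is needed.
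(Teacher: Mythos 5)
Your proposal is correct and follows essentially the same route as the paper: decompose $Q+\delta\,I_n\otimes 1$ as $\alpha(R\otimes S)\alpha^*$ in $D_n^{\max}(\cl S_2,\cl T)$, lift the matrix of $\cl S_2$-entries via the complete order quotient property, and absorb the resulting error term $\eta\,\alpha((I_k\otimes 1_{\cl S_1})\otimes S)\alpha^*$ into a multiple of the identity. The only difference is cosmetic bookkeeping: the paper first normalizes $\|S\|,\|\alpha\|\le 1$ (absorbing the scalars into $R$) so that a single tolerance $\varepsilon$ suffices and the lift comes out within $2\varepsilon$, whereas you keep the norms unnormalized and introduce the second, later-chosen tolerance $\eta$ — precisely the quantifier-order point you flag.
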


\begin{proof}
We choose an element $z$ in $M_n (\cl S_2 \otimes_{\max} \cl T)^+$
and $\varepsilon
>0$. Then we can write $$z + \varepsilon I_n \otimes 1_{\cl S_2} \otimes 1_{\cl T}
= \alpha P_2 \otimes Q \alpha^*,\qquad P_2 \in M_p(\cl S_2)^+, Q
\in M_q(\cl T)^+, \alpha \in M_{n, pq}.$$ We may assume that
$\|Q\|, \|\alpha \| \le 1$. There exists an element $P_1$ in
$M_p(\cl S_1)$ such that $$\Phi_p (P_1) = P_2 \qquad \text{and}
\qquad P_1 + \varepsilon I_p \otimes 1_{\cl S_1} \in M_p(\cl
S_1)^+.$$ It follows that
$$(\Phi \otimes {\rm id}_{\cl T})_n (\alpha P_1 \otimes Q \alpha^* -
\varepsilon I_n \otimes 1_{\cl S_1} \otimes 1_{\cl T}) = \alpha
P_2 \otimes Q \alpha^* - \varepsilon I_n \otimes 1_{\cl S_2}
\otimes 1_{\cl T} = z$$ and
$$\begin{aligned} & (\alpha P_1 \otimes Q \alpha^* - \varepsilon I_n \otimes
1_{\cl S_1} \otimes 1_{\cl T}) + 2 \varepsilon I_n
\otimes 1_{\cl S_1} \otimes 1_{\cl T} \\
 = & \alpha (P_1 + \varepsilon I_p \otimes 1_{\cl S_1}) \otimes Q \alpha^* +
 (\varepsilon I_n \otimes 1_{\cl S_1} \otimes 1_{\cl T} - \varepsilon \alpha ((I_p
 \otimes 1_{\cl S_1}) \otimes Q) \alpha^*) \\ \in &
 M_n(\cl S_1 \otimes_{\max} \cl T)^+.\end{aligned}$$
\end{proof}

Suppose that we are given an operator system $\cl S$ and a unital
$C^*$-algebra $\cl A$ such that $\cl S$ is an $\cl A$-bimodule.
Moreover, we assume that $1_{\cl A} \cdot s =s$ for $s \in \cl S$.
We call such an $\cl S$ an operator $\cl A$-system \cite[Chapter
15]{Pa} provided that $a \cdot 1_S = 1_S \cdot a$ and
$$[a_{i,j}] \cdot [s_{i,j}] \cdot [a_{i,j}]^* = [\sum_{k,l=1}^n
a_{i,k} \cdot s_{k,l} \cdot a_{j,l}^*] \in M_n(\cl S)^+, \qquad
[a_{i,j}] \in M_n(\cl A), [s_{i,j}] \in M_n(\cl S)^+.$$ The
maximal tensor product $\cl A \otimes_{\max} \cl S$ is an operator
$\cl A$-system \cite[Theorem 6.7]{KPTT1}.

The converse of Proposition \ref{quotient map} does not hold in
general since the operator space structure induced by the operator
system quotient by a kernel can be different from the operator
space quotient by it \cite[Example 4.4]{KPTT2}. However, the
converse holds in the following special situation. Although the
following theorem overlaps with \cite[Corollary 5.15]{KPTT2}, we
include it here because the proof is so elementary.

\begin{thm}\label{quotient}
Suppose that $\cl S$ is an operator system and $\cl A$ is a unital
$C^*$-algebra with its norm closed ideal $\cl I$. Then the
canonical map
$$\pi \otimes {\rm id}_{\cl S} : \cl A \otimes_{\max}
\cl S \to \cl A \slash \cl I \otimes_{\max} \cl S$$ is a complete
quotient map.
\end{thm}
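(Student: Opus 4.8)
The plan is to verify the defining property of a complete quotient map directly. Since $\pi \otimes {\rm id}_{\cl S}$ is unital completely positive, it is completely contractive and so carries the open unit ball of $M_n(\cl A \otimes_{\max} \cl S)$ into that of $M_n((\cl A \slash \cl I) \otimes_{\max} \cl S)$; hence for each $n$ it suffices to lift a given $w \in M_n((\cl A \slash \cl I) \otimes_{\max} \cl S)$ with $\|w\| < 1$ to some $v \in M_n(\cl A \otimes_{\max} \cl S)$ with $\|v\| < 1$ and $(\pi \otimes {\rm id})_n(v) = w$. Write $1 = 1_{\cl A} \otimes 1_{\cl S}$ and $\bar 1 = 1_{\cl A \slash \cl I} \otimes 1_{\cl S}$ for the two order units, fix $s$ with $\|w\| < s < 1$, and compute the order unit norm by the usual $2 \times 2$ trick: the inequality $\|w\| \le s$ is equivalent to
\[
G = \begin{pmatrix} s\, I_n \otimes \bar 1 & w \\ w^{*} & s\, I_n \otimes \bar 1 \end{pmatrix} \in M_{2n}((\cl A \slash \cl I) \otimes_{\max} \cl S)^{+}.
\]
By the Archimedeanization description of the maximal cone, for each $\eta > 0$ I can write $G + \eta\, I_{2n} \otimes \bar 1 = \beta(\bar P \otimes Q)\beta^{*}$ with $\bar P \in M_k(\cl A \slash \cl I)^{+}$, $Q \in M_l(\cl S)^{+}$ and $\beta \in M_{2n,kl}$.

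The second step lifts this decomposition. Because $\pi_k : M_k(\cl A) \to M_k(\cl A \slash \cl I)$ is a surjective $*$-homomorphism, the positive element $\bar P$ lifts to some $P \in M_k(\cl A)^{+}$, and then $\widehat G := \beta(P \otimes Q)\beta^{*} \in D_{2n}^{\max}(\cl A, \cl S) \subset M_{2n}(\cl A \otimes_{\max} \cl S)^{+}$ satisfies $(\pi \otimes {\rm id})_{2n}(\widehat G) = G + \eta\, I_{2n} \otimes \bar 1$. Writing $\widehat G = \left[\begin{smallmatrix} A & v \\ v^{*} & B \end{smallmatrix}\right]$, the off-diagonal block $v$ is a lift of $w$ and the diagonal blocks $A, B \ge 0$ lift $(s+\eta)\, I_n \otimes \bar 1$; the only defect is that $\|A\|$ and $\|B\|$ need not be close to $s + \eta$, so I cannot yet control $\|v\|$.

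The third step removes this defect using the operator $\cl A$-system structure of $\cl A \otimes_{\max} \cl S$ from \cite[Theorem 6.7]{KPTT1}. Let $(e_\lambda)$ be an approximate unit for $\cl I$ and put $d_\lambda = (1 - e_\lambda)^{1/2} \in \cl A$, so $0 \le d_\lambda \le 1$ and $\pi(d_\lambda) = 1_{\cl A \slash \cl I}$. Conjugating $\widehat G$ by $I_{2n} \otimes d_\lambda$ keeps it in $M_{2n}(\cl A \otimes_{\max} \cl S)^{+}$ by the module inequality, and since $\pi(d_\lambda) = 1_{\cl A \slash \cl I}$ acts as the identity on $(\cl A \slash \cl I) \otimes_{\max} \cl S$, the image $G + \eta\, I_{2n} \otimes \bar 1$ is unchanged; in particular $v_\lambda := (I_n \otimes d_\lambda)\, v\, (I_n \otimes d_\lambda)$ is again a lift of $w$. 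On the diagonal, $A_\lambda := (I_n \otimes d_\lambda)\, A\, (I_n \otimes d_\lambda) = (s+\eta)(1 - e_\lambda) \otimes 1_{\cl S} + (I_n \otimes d_\lambda)\, k_A\, (I_n \otimes d_\lambda)$, where $k_A = A - (s+\eta)\, I_n \otimes 1$.

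The key point, which I expect to be the main obstacle, is that the correction term tends to $0$ in norm. Here I use that the kernel of $\pi \otimes {\rm id}_{\cl S}$ on the algebraic tensor product is exactly $\cl I \otimes \cl S$: the entries of $A$ and of $(s+\eta)\, I_n \otimes 1$ lie in $\cl A \otimes \cl S$, so $k_A \in M_n(\cl A \otimes \cl S)$, and $(\pi \otimes {\rm id})(k_A) = 0$ forces $k_A \in M_n(\cl I \otimes \cl S)$. Writing $k_A = \sum_t C_t \otimes s_t$ with $C_t \in M_n(\cl I)$ and carrying out the conjugation entrywise, the correction equals $\sum_t (D_\lambda C_t D_\lambda) \otimes s_t$ with $D_\lambda = I_n \otimes d_\lambda \in M_n(\cl A)$; since $I_n \otimes e_\lambda$ is an approximate unit for $M_n(\cl I)$ one gets $\|D_\lambda C_t D_\lambda\| \le \|D_\lambda C_t\| \to 0$, and the cross-norm bound $\|c \otimes s\|_{\max} \le \|c\|\,\|s\|$ then gives $\|A_\lambda - (s+\eta)(1 - e_\lambda) \otimes 1_{\cl S}\| \to 0$, whence $\|A_\lambda\| \le s + \eta + o(1)$, and similarly for $B_\lambda$. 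Finally, positivity of $\left[\begin{smallmatrix} A_\lambda & v_\lambda \\ v_\lambda^{*} & B_\lambda \end{smallmatrix}\right]$ in the operator system yields the operator bound $\|v_\lambda\| \le \|A_\lambda\|^{1/2} \|B_\lambda\|^{1/2} \le s + \eta + o(1)$. Choosing $\eta$ with $s + \eta < 1$ and then $\lambda$ large makes $\|v_\lambda\| < 1$, so $v_\lambda$ is the desired norm-controlled lift of $w$; the case of $w$ in the completed tensor product follows by a routine approximation.
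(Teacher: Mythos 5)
Your proof is correct, and it shares the paper's overall skeleton --- the $2\times 2$ positivity trick converting norm-lifting into positive-lifting, followed by a conjugation inside the operator $\cl A$-system $\cl A \otimes_{\max} \cl S$ to repair the diagonal --- but you implement both key steps differently. For the lift, the paper simply invokes Theorem \ref{projective} (projectivity of $\otimes_{\max}$ under complete order quotient maps) to produce a positive element $\begin{pmatrix} 1\otimes 1 + \omega_{11} & z+\omega_{12} \\ z^*+\omega_{21} & 1\otimes 1+\omega_{22} \end{pmatrix}$ with $\omega_{ij} \in \cl I \otimes \cl S$, whereas you rebuild the lift from the Archimedeanization description of $D^{\max}$ together with the fact that positive elements lift exactly through the surjective $*$-homomorphism $\pi_k$; this is self-contained and exploits the $C^*$-structure, but it essentially re-proves the special case of Theorem \ref{projective} being used. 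For the diagonal repair, the paper dominates $\omega_{11} \le a \otimes 1_{\cl S}$ and $\omega_{22} \le d \otimes 1_{\cl S}$ with $a, d \in \cl I^+$ (via $(\cl I \otimes \cl S)_h = \cl I_h \otimes \cl S_h$) and performs a single exact conjugation by $(1_{\cl A}+a)^{-1/2} \oplus (1_{\cl A}+d)^{-1/2}$, renormalizing the diagonal to $1 \otimes 1$ outright; you instead conjugate by $I \otimes (1-e_\lambda)^{1/2}$ along an approximate unit of $\cl I$ and pass to a limit, using the cross-norm bound to kill the corrections $(I_n \otimes d_\lambda) k (I_n \otimes d_\lambda)$, $k \in M_n(\cl I \otimes \cl S)$. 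The paper's conjugation is limit-free and needs no cross-norm estimate; yours avoids the domination trick and has the small advantage of producing a lift of norm strictly less than $1$ directly (the paper's lift only has norm $\le 1$, and the routine scaling remark needed to get the open-ball-onto-open-ball condition is left implicit there). Two minor points: your closing sentence about ``the completed tensor product'' is vacuous, since the operator system maximal tensor product is defined on the algebraic tensor product and no completion is involved; and you work at all matrix levels $n$ directly, where the paper first reduces to $n=1$ via nuclearity of matrix algebras --- both are fine.
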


\begin{proof}
By the nuclearity of matrix algebras, it is sufficient to show
that the canonical map $\pi \otimes {\rm id}_{\cl S} : \cl A
\otimes_{\max} \cl S \to \cl A \slash \cl I \otimes_{\max} \cl S$
is a quotient map. We suppose that
$$\|(\pi \otimes {\rm id}_{\cl S})(z)\|_{\cl A \slash
\cl I \otimes_{\max} \cl S} <1$$ for some $z \in \cl A \otimes \cl
S$. Then we have
$$\begin{pmatrix} (1-\varepsilon)1_{\cl A \slash \cl I} \otimes 1_{\cl S} & (\pi
\otimes {\rm id}_{\cl S})(z) \\ (\pi \otimes {\rm id}_{\cl
S})(z)^* & (1-\varepsilon) 1_{\cl A \slash \cl I} \otimes 1_{\cl
S}
\end{pmatrix} \in M_2(A \slash \cl I \otimes_{\max} \cl S)^+$$ for
$\varepsilon = 1 - \|(\pi \otimes {\rm id}_{\cl S})(z)\|_{\cl A
\slash \cl I \otimes_{\max} \cl S}$. By Theorem \ref{projective},
we can
find an element $\begin{pmatrix} \omega_{11} & \omega_{12} \\
\omega_{21} & \omega_{22} \end{pmatrix}$ in $M_2(\cl I \otimes \cl
S)$ such that
$$\begin{pmatrix} 1_{\cl A} \otimes 1_{\cl S} + \omega_{11} & z +
\omega_{12} \\ z^* + \omega_{21} & 1_{\cl A} \otimes 1_{\cl S} +
\omega_{22}
\end{pmatrix} \in M_2(\cl A \otimes_{\max} \cl S)^+.$$
Since we have $(\cl I \otimes \cl S)_h = \cl I_h \otimes \cl S_h$
\cite{CE}, $\omega_{11}$ can be written as $\omega_{11} =
\sum_{i=1}^n a_i \otimes s_i$ for $a_i \in I_h$ and $s_i \in \cl
S_h$. Let $a_i = a_i^+ - a_i^-$ for $1 \le i \le n$ and $a_i^+,
a_i^- \in \cl I^+$. We have
$$\omega_{11} = \sum_{i=1}^n a_i^+ \otimes s_i + a_i^- \otimes
(-s_i) \le \sum_{i=1}^n \|s_i\|(a_i^++a_i^-) \otimes 1_{\cl S}.$$
Hence, we can find elements $a$ and $d$ in $\cl I^+$ such that
$$\begin{pmatrix} (1_{\cl A} + a) \otimes 1_{\cl S} & z + \omega_{12} \\ z^* +
\omega_{21} & (1_{\cl A} + d)\otimes 1_{\cl S}
\end{pmatrix} \in M_2(\cl A \otimes_{\max} \cl S)^+.$$ Since $\cl A
\otimes_{\max} \cl S$ is an operator $\cl A$-system \cite[Theorem
6.7]{KPTT1}, we have
$$\begin{aligned} & \begin{pmatrix} 1_{\cl A} \otimes 1_{\cl S} & (1_{\cl A}+a)^{-{1 \over
2}} \cdot (z+\omega_{12}) \cdot (1_{\cl A}+d)^{-{1 \over 2}} \\
(1_{\cl A}+d)^{-{1 \over 2}} \cdot (z^*+\omega_{21}) \cdot (1_{\cl
A}+a)^{-{1 \over 2}} & 1_{\cl A} \otimes 1_{\cl S}
\end{pmatrix} \\ = & \begin{pmatrix} (1_{\cl A} + a)^{-{1 \over 2}} & 0 \\ 0
& (1_{\cl A} + d)^{-{1 \over 2}} \end{pmatrix} \cdot
\begin{pmatrix} (1_{\cl A} + a) \otimes
1_{\cl S} & z + \omega_{12} \\ z^* + \omega_{21} & (1_{\cl A} +
d)\otimes 1_{\cl S}
\end{pmatrix} \cdot \begin{pmatrix} (1_{\cl A} + a)^{-{1 \over 2}} & 0 \\ 0
& (1_{\cl A} + d)^{-{1 \over 2}} \end{pmatrix} \\ \in & \ M_2(\cl
A \otimes_{\max} \cl S)^+. \end{aligned}$$ It follows that
$$\| (1_{\cl A}+a)^{-{1 \over 2}} \cdot (z+\omega_{12}) \cdot
(1_{\cl A}+d)^{-{1 \over 2}} \|_{\cl A \otimes_{\max} \cl S} \le
1$$ and $$(\pi \otimes {\rm id}_{\cl S}) ((1_{\cl A}+a)^{-{1 \over
2}} \cdot (z+\omega_{12}) \cdot (1_{\cl A}+d)^{-{1 \over 2}}) =
(\pi \otimes {\rm id}_{\cl S})(z).$$
\end{proof}

\section{The equivalence of the $({\rm el},\max)$-nuclearity and the WEP}

As we have seen in the previous section, the maximal tensor
product is projective. However, the maximal tensor product need
not be injective. This misbehavior distinguishes a class which is
called $({\rm el},\max)$-nuclear operator systems \cite{KPTT2}. In
the category of $C^*$-algebras, Lance characterized this tensorial
property by the factorization property for the inclusion map into
the second dual through $B(H)$ \cite{L1,L2}. In \cite{KPTT2}, it
is proved that this factorization property for an operator system
implies its $({\rm el},\max)$-nuclearity and it is asked whether
the converse holds. In this section, we answer this question in
the affirmative, independent of Lance's original theorem.

\begin{thm}\label{WEP}
Let $\cl S$ be an operator system. The following are equivalent:
\begin{enumerate}
\item[(i)] we have $$\cl S \otimes_{\max} \cl T \subset \cl S_2
\otimes_{\max} \cl T$$ for any inclusion $\cl S \subset \cl S_2$
and any operator system $\cl T$; \item[(ii)] we have $$\cl S
\otimes_{\max} E \subset \cl S_2 \otimes_{\max} E$$ for any
inclusion $\cl S \subset \cl S_2$ and any finite dimensional
operator system $E$; \item[(iii)] we have $$\cl S \otimes_{\max} E
\subset B(H) \otimes_{\max} E$$ for any inclusion $\cl S \subset
B(H)$ and any finite dimensional operator system $E$; \item[(iv)]
there exist unital completely positive maps $\Phi : \cl S \to
B(H)$ and $\Psi : B(H) \to \cl S^{**}$ such that $\Psi \circ \Phi
= \iota$ for the canonical inclusion $\iota : \cl S
\hookrightarrow \cl S^{**}$.
$$\xymatrix{\cl S \ar@{^{(}->}[rr]^\iota \ar[dr]_\Phi && \cl S^{**} \\
& B(H) \ar[ur]_\Psi &}$$
\end{enumerate}
\end{thm}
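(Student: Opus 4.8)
The plan is to prove the cyclic chain of implications $(i)\Rightarrow(ii)\Rightarrow(iii)\Rightarrow(iv)\Rightarrow(i)$. Three of these four steps are relatively soft. The implication $(i)\Rightarrow(ii)$ is trivial, since finite dimensional operator systems are a special case of arbitrary operator systems. The implication $(iv)\Rightarrow(i)$ is already recorded in the excerpt: condition $(iv)$ is exactly the weak expectation property, and by \cite[Theorem 6.7]{KPTT2} the WEP implies $({\rm el},\max)$-nuclearity, which by \cite[Lemma 6.1]{KPTT2} is precisely statement $(i)$. For $(ii)\Rightarrow(iii)$, I would fix a unital complete order embedding $\cl S\subset B(H)$ and simply specialize the hypothesis in $(ii)$ to the inclusion $\cl S\subset B(H)$ with $\cl S_2=B(H)$, testing against an arbitrary finite dimensional operator system $E$.

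The substantive part, and the step I expect to be the main obstacle, is $(iii)\Rightarrow(iv)$: building the weak expectation from a finite dimensional tensorial hypothesis. The natural strategy is to fix a unital complete order embedding $\Phi:\cl S\hookrightarrow B(H)$ and try to produce a unital completely positive $\Psi:B(H)\to\cl S^{**}$ splitting the inclusion $\iota$. The device for converting the tensorial condition $(iii)$ into a concrete map is duality: for a finite dimensional operator system $E$, completely positive maps into $\cl S^{**}$ (or $B(H)$) correspond, via the canonical identification of $\cl S\otimes E$-type objects with mapping spaces, to positive elements of an appropriate tensor product. Concretely, the inclusion $\cl S\otimes_{\max}E\subset B(H)\otimes_{\max}E$ in $(iii)$ should be dualized so that a state or a completely positive map detected on the larger space restricts correctly to $\cl S$. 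The aim is to show that $\iota:\cl S\to\cl S^{**}$, viewed through the maximal tensor product with finite dimensional $E$, extends to all of $B(H)$ completely positively.

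The cleanest route is to pass through the double dual and use an approximation by finite dimensional pieces together with a weak$^*$-compactness argument. First I would note that a unital completely positive map $B(H)\to\cl S^{**}$ is the same data as a unital completely positive map on each finite dimensional subsystem that is compatible in the limit; equivalently, one seeks a net of finite-rank-type completely positive maps whose weak$^*$-cluster point gives $\Psi$. Condition $(iii)$, read for each finite dimensional $E\subset\cl S^{**}$ (or $E$ dual to a finite dimensional subsystem of $\cl S$), guarantees that the identity-like element $\iota$ lives in the maximal tensor product computed inside $B(H)$, and hence factors appropriately at the finite dimensional level. The hard technical point is to assemble these finite dimensional factorizations coherently: one must check that the cones $M_n(\cl S\otimes_{\max}E)^+$ embed in $M_n(B(H)\otimes_{\max}E)^+$ compatibly as $E$ varies, and then invoke a compactness argument on the space of unital completely positive maps from $B(H)$ into the (order-complete, by passage to the second dual) system $\cl S^{**}$ to extract the limiting $\Psi$ with $\Psi\circ\Phi=\iota$.

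I would then verify that the limiting $\Psi$ is unital and completely positive — unitality being automatic from the unitality preserved at each finite stage, and complete positivity following because positivity is a weak$^*$-closed condition and each approximant is completely positive. The identity $\Psi\circ\Phi=\iota$ should hold because on the dense union of finite dimensional subsystems the composition agrees with $\iota$ by construction, and both sides are continuous. This closes the cycle and, specialized to $C^*$-algebras, recovers Lance's theorem as the promised corollary.
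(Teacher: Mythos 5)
Your handling of $\rm (i)\Rightarrow(ii)$, $\rm (ii)\Rightarrow(iii)$ and $\rm (iv)\Rightarrow(i)$ coincides with the paper's (the last is indeed just [KPTT2, Theorem 6.7] together with [KPTT2, Lemma 6.1]). The problem is $\rm (iii)\Rightarrow(iv)$: what you have written there is a plan, not a proof, and the part you label ``the hard technical point'' is exactly where all the mathematical content sits. Three concrete ingredients are missing. First, you never specify which finite dimensional operator systems $E$ are fed into hypothesis (iii). The choice that makes the argument work is $E=E_\lambda^*$, where $E_\lambda=P_\lambda \cl S P_\lambda$ is the \emph{compression} of $\cl S$ by a finite rank projection $P_\lambda$ on $H$; one needs the fact that the dual of a finite dimensional operator system carries a (non-canonical) Archimedean order unit [CE, Corollary 4.5], so that $E_\lambda^*$ is again an operator system. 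Note that $E_\lambda$ is a compression, not a subsystem, so your parenthetical ``$E$ dual to a finite dimensional subsystem of $\cl S$'' points at the wrong objects. Second, the mechanism converting (iii) into maps runs in the opposite direction from the one you describe: by [KPTT1, Lemma 5.7] the ucp compression $\Phi_\lambda=P_\lambda\cdot P_\lambda:\cl S\to E_\lambda$ corresponds to a positive functional $\varphi_\lambda$ on $\cl S\otimes_{\max}E_\lambda^*$, and one must \emph{extend} $\varphi_\lambda$, via Krein's theorem, to a positive functional $\psi_\lambda$ on $B(H)\otimes_{\max}E_\lambda^*$; it is precisely for this extension step that the complete order embedding in (iii) is required. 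Your phrase about a state ``detected on the larger space restricting correctly to $\cl S$'' describes restriction, which is trivially available and uses nothing.

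Third, and most seriously, your compactness argument cannot by itself produce a map with range in $\cl S^{**}$. The approximants obtained from the duality are maps $\Psi_\lambda:B(H)\to E_\lambda\subset B(H)$ (made unital by composing with $\theta_\lambda(x)=x+\omega_\lambda(x)(I-P_\lambda)$ for a state $\omega_\lambda$ on $E_\lambda$); their values are compressions $P_\lambda x_\lambda P_\lambda$ of elements $x_\lambda\in\cl S$, which in general lie in neither $\cl S$ nor $\cl S^{**}$. Hence the point-weak$^*$ cluster point $\Psi$ is a priori only a ucp map $B(H)\to B(H)$ fixing $\cl S$, and an appeal to compactness of the set of ucp maps into $\cl S^{**}$ is not available. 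The paper closes this gap by first arranging, via the bidual of the inclusion and the universal representation of $B(H)$, that $\cl S\subset\cl S^{**}\subset B(H)$ with the second inclusion weak$^*$-WOT homeomorphic; then, using that $\Phi_\lambda$ is surjective onto $E_\lambda$ and that $\Psi_\lambda$ extends $\Phi_\lambda$, every value $\Psi_\lambda(x)$ equals $P_\lambda x_\lambda P_\lambda$ with $x_\lambda\in\cl S$, so that $\langle\Psi(x)\xi,\eta\rangle=\lim_\lambda\langle x_\lambda\xi,\eta\rangle$ and $\Psi(x)$ lies in the WOT-closure of $\cl S$, which is $\cl S^{**}$. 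Without this range argument (or a substitute for it), what you construct is not a weak expectation, and the implication remains unproved.
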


\begin{proof}
Clearly, (i) implies (ii) and (ii) implies (iii). The direction
$\rm (iv) \Rightarrow (i)$ follows from \cite[Theorem 6.7]{KPTT2}.

$\rm (iii)\Rightarrow (iv).$ Suppose that an operator system $\cl
S$ acts on a Hilbert space $H$. Considering the bidual of the
inclusion $\cl S \subset B(H)$ and the universal representation of
$B(H)$, we may assume that the inclusions $\cl S \subset \cl
S^{**} \subset B(H)$ are given such that the second inclusion is
weak$^*$-WOT homeomorphic. Let $\{ P_\lambda \}$ be the family of
projections on the finite dimensional subspaces of $H$ directed by
the inclusions of their ranges. We put $$E_\lambda = P_\lambda
\mathcal S P_\lambda \qquad \text{and} \qquad \Phi_\lambda :=
P_\lambda \cdot P_\lambda : \mathcal S \to E_\lambda.$$ Since each
$E_\lambda$ is a finite dimensional operator system, there exists
a non-canonical Archimedean order unit on its dual space
$E_\lambda^*$ \cite[Corollary~4.5]{CE}. In other words, the dual
space $E^*_\lambda$ is an operator system. By \cite[Lemma
5.7]{KPTT1}, a functional $\varphi_\lambda$ on $\mathcal S
\otimes_{\max} E_\lambda^*$ corresponding to the compression
$\Phi_\lambda : \mathcal S \to E_\lambda$ in the standard way is
positive. By assumption, $\cl S \otimes_{\max} E_\lambda^*$ is an
operator subsystem of $B(H) \otimes_{\max} E_\lambda^*$. By
Krein's theorem, $\varphi_\lambda$ extends to a positive
functional $\psi_\lambda$ on $B(H) \otimes_{\max} E_\lambda^*$.
$$\xymatrix{B(H) \otimes_{\max} E_\lambda^* \ar[drr]^-{\psi_\lambda} && \\
\cl S \otimes_{\max} E_\lambda^* \ar@{^{(}->}[u]
\ar[rr]_-{\varphi_\lambda} && \bb C}$$ Let $\Psi_\lambda : B(H)
\to E_\lambda$ be a completely positive map corresponding to
$\psi_\lambda$ in the standard way. Since $\Phi_\lambda =
\Psi_\lambda \circ \iota$ for the inclusion $\iota : \cl S \subset
B(H)$, $\Psi_\lambda$ is a unital completely positive map. We take
a state $\omega_\lambda$ on $E_\lambda$ and define a unital
completely positive map $\theta_\lambda : E_\lambda \to B(H)$ by
$$\theta_\lambda (x) = x+ \omega_\lambda(x)(I-P_\lambda), \qquad x \in
E_\lambda.$$ Let $\Psi : B(H) \to B(H)$ be a point-weak$^*$
cluster point of $\{ \theta_\lambda \circ \Psi_\lambda \}$. We may
assume that $\theta_\lambda \circ \Psi_\lambda$ converges to
$\Psi$ in the point-weak$^*$ topology. For $\xi, \eta \in H$ and
$x \in \cl S$, we have $$\langle \Psi (x) \xi, \eta \rangle =
\lim_\lambda \langle (P_\lambda x P_\lambda + \omega_\lambda
(P_\lambda x P_\lambda)(I-P_\lambda))\xi , \eta \rangle = \langle
x \xi, \eta \rangle.$$ It follows that $\Psi|_{\cl S} = \iota$.
Since $\Phi_\lambda$ is surjective and $\Psi_\lambda$ is an
extension of $\Phi_\lambda$, for each $x \in B(H)$ there exists an
element $x_\lambda$ in $\cl S$ such that $\Psi_\lambda(x) =
\Phi_\lambda (x_\lambda) = P_\lambda x_\lambda P_\lambda$. For
$\xi, \eta \in H$ and $x \in B(H)$, we have
$$\langle \Psi (x) \xi, \eta \rangle = \lim_\lambda \langle
(P_\lambda x_\lambda P_\lambda + \omega_\lambda (P_\lambda
x_\lambda P_\lambda)(I-P_\lambda))\xi , \eta \rangle =
\lim_\lambda \langle x_\lambda \xi, \eta \rangle.$$ It follows
that $\Psi(x)$ belongs to the WOT-closure of $\cl S$ which
coincides with $\cl S^{**}$ because the inclusion $\cl S^{**}
\subset B(H)$ is weak$^*$-WOT homeomorphic.
\end{proof}

As a corollary, we deduce the following theorem of Lance. The
proof is similar to that of \cite[Corollary 3.3]{HP}.

\begin{cor}[Lance] Let $\cl A$ be a unital $C^*$-algebra.
Then we have $\cl A \otimes_{\max} \cl B \subset \cl A_2
\otimes_{\max} \cl B$ for any inclusion $\cl A \subset \cl A_2$
and any unital $C^*$-algebra $\cl B$ if and only if $\cl A$ has
the weak expectation property.
\end{cor}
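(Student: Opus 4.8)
The plan is to deduce Lance's $C^*$-algebra characterization from the just-proved operator system theorem (Theorem~\ref{WEP}) by identifying the two ingredients that must be reconciled: the tensor product being used, and the class of operator systems ($C^*$-algebras) that Lance's statement quantifies over. The corollary restricts attention to the case where $\cl A$ and $\cl B$ are unital $C^*$-algebras, so the tensor product appearing in Lance's statement is the $C^*$-algebraic maximal tensor product $\otimes_{\rm C^*\max}$, whereas Theorem~\ref{WEP} is phrased in terms of the operator system maximal tensor product $\otimes_{\max}$. The preliminaries record the completely order isomorphic inclusion $\cl A \otimes_{\max} \cl B \subset \cl A \otimes_{\rm C^*\max} \cl B$ from \cite[Theorem 5.12]{KPTT1}, which is the bridge between the two worlds; likewise the WEP for the $C^*$-algebra $\cl A$ and the operator system WEP of Theorem~\ref{WEP}(iv) both assert a factorization of $\iota : \cl A \hookrightarrow \cl A^{**}$ through $B(H)$ by unital completely positive maps, so these two notions coincide without any additional argument.

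**First I would** handle the easy direction, which is essentially a citation: if $\cl A$ has the WEP, then Theorem~\ref{WEP} ((iv)$\Rightarrow$(i)) gives $\cl A \otimes_{\max} \cl C \subset \cl A_2 \otimes_{\max} \cl C$ for \emph{every} inclusion $\cl A \subset \cl A_2$ of operator systems and \emph{every} operator system $\cl C$. Specializing $\cl A_2$ to a unital $C^*$-algebra and $\cl C = \cl B$ to a unital $C^*$-algebra, I then compose this operator system inclusion with the $C^*$-algebraic inclusions from \cite[Theorem 5.12]{KPTT1} on both sides to obtain $\cl A \otimes_{\rm C^*\max} \cl B \subset \cl A_2 \otimes_{\rm C^*\max} \cl B$. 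The only point requiring mild care is that the inclusion of the $C^*$-maximal tensor norms must be compatible with the operator-system inclusion, i.e.\ that the two diagrams commute at the level of the algebraic tensor product $\cl A \otimes \cl B$; this is immediate because all the maps in sight restrict to the identity on $\cl A \otimes \cl B$.

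**For the converse,** which is the more interesting direction, I would assume the $C^*$-maximal inclusion property and derive the operator system WEP via Theorem~\ref{WEP}(iv), by checking condition (iii) of that theorem. The subtlety here is that Lance's hypothesis quantifies only over \emph{$C^*$-algebras} $\cl B$, whereas condition (iii) of Theorem~\ref{WEP} requires the inclusion $\cl S \otimes_{\max} E \subset B(H) \otimes_{\max} E$ for every \emph{finite dimensional operator system} $E$. The way to cross this gap, following the strategy of \cite[Corollary 3.3]{HP}, is to embed the finite dimensional operator system $E$ into its enveloping $C^*$-algebra $C^*_u(E)$ (or into some $B(K)$), use the left injectivity and functoriality properties of the relevant tensor products to control how $\otimes_{\max}$ behaves under such an embedding, and then invoke the $C^*$-algebraic hypothesis with $\cl B = C^*_u(E)$ together with the inclusion $\cl A \otimes_{\max} \cl B \subset \cl A \otimes_{\rm C^*\max} \cl B$ to transfer the containment back down to the operator system level. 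The main obstacle, as I anticipate it, is precisely this reduction from arbitrary finite dimensional operator systems to $C^*$-algebras: one must verify that the maximal tensor product of $\cl A$ with the $C^*$-algebra $C^*_u(E)$ genuinely detects the maximal tensor product of $\cl A$ with $E$, which is exactly where the interplay between $\otimes_{\max}$ and $\otimes_{\rm C^*\max}$ recorded in the preliminaries has to be deployed carefully; once (iii) is established, Theorem~\ref{WEP} delivers the WEP for $\cl A$ as an operator system, and this coincides with the $C^*$-algebraic WEP as noted above.
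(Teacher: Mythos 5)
Your proposal has the same overall architecture as the paper's proof: both directions are funneled through Theorem~\ref{WEP}, and the hard direction is reduced to bridging the gap between $C^*$-algebras $\cl B$ (which Lance's hypothesis quantifies over) and operator systems $E$ (which Theorem~\ref{WEP} needs), using the universal $C^*$-algebra $C^*_u(E)$. However, the step you yourself flag as ``the main obstacle'' --- that $\cl A \otimes_{\max} C^*_u(E)$ ``genuinely detects'' $\cl A \otimes_{\max} E$ --- is left unresolved, and the tools you propose for it cannot resolve it. What is needed is that the canonical map $\cl A \otimes_{\max} E \to \cl A \otimes_{\max} C^*_u(E)$ (equivalently, into $\cl A \otimes_{C^*\max} C^*_u(E)$) is a complete order embedding, so that the hypothesis applied with $\cl B = C^*_u(E)$ can be restricted back to $E$. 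This is precisely an injectivity statement for $\otimes_{\max}$ in one variable, and the maximal tensor product is \emph{not} injective --- that failure is the very phenomenon the theorem characterizes --- so neither functoriality, nor left injectivity of ${\rm el}$, nor the inclusion $\cl A \otimes_{\max} \cl B \subset \cl A \otimes_{\rm C^*\max} \cl B$ for pairs of $C^*$-algebras recorded in the preliminaries can deliver it. The paper supplies exactly this missing ingredient from outside the preliminaries, via the commuting tensor product: $\cl A \otimes_{\max} \cl T = \cl A \otimes_{\rm c} \cl T$ when one factor is a unital $C^*$-algebra [KPTT1, Theorem~6.4], and $\cl A \otimes_{\rm c} \cl T \subset \cl A \otimes_{C^*\max} C^*_u(\cl T)$ completely order isomorphically [KPTT1, Theorem~6.7]; these two facts, applied to both $\cl A$ and $B(H)$, produce the commutative diagram that transfers the hypothesis (with $\cl B = C^*_u(\cl T)$) down to $\cl A \otimes_{\max} \cl T \subset B(H) \otimes_{\max} \cl T$ for \emph{every} operator system $\cl T$, not only finite-dimensional ones. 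Without citing these results (or an equivalent), your reduction does not go through.

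A secondary, more cosmetic issue: the corollary as stated in the paper uses the operator system maximal tensor product $\otimes_{\max}$, not $\otimes_{\rm C^*\max}$, so your reframing of the statement in $C^*$-maximal terms adds steps that are not needed. For the easy direction this is harmless (though your claim that the $C^*\max$-level inclusion follows is justified by a density-plus-isometry argument for the $*$-homomorphism $\cl A \otimes_{\rm C^*\max}\cl B \to \cl A_2 \otimes_{\rm C^*\max}\cl B$, not merely by the diagrams commuting on $\cl A \otimes \cl B$); as stated, the forward direction is literally Theorem~\ref{WEP}~(iv)$\Rightarrow$(i) specialized to $C^*$-algebras.
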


\begin{proof}
By Theorem \ref{WEP}, it will be enough to prove that if $\cl A
\otimes_{\max} \cl B \subset B(H) \otimes_{\max} \cl B$ for any
unital $C^*$-algebra $\cl B$, then $\cl A \otimes_{\max} \cl T
\subset B(H) \otimes_{\max} \cl T$ for any operator system $\cl
T$. Due to \cite[Theorem~6.4]{KPTT1} and
\cite[Theorem~6.7]{KPTT1}, we obtain the following commutative
diagram, which yields the conclusion:
$$\xymatrix{\cl A \otimes_{\max}
\cl T \ar@{=}[r] \ar[d] & \cl A \otimes_{\rm c} \cl
T \ar@{^{(}->}[r] & \cl A \otimes_{C^*\max} C^*_u(\cl T) \ar@{_{(}->}[d] \\
B(H) \otimes_{\max} \cl T \ar@{=}[r] & B(H) \otimes_{\rm c} \cl T
\ar@{^{(}->}[r] & B(H) \otimes_{C^*\max} C^*_u(\cl T)}$$
\end{proof}

Examples of nuclear operator systems which are not unitally
completely order isomorphic to any unital $C^*$-algebra have been
constructed in \cite{KW,HP}. These also provide examples of
operator systems with the WEP which are not unitally completely
order isomorphic to any unital $C^*$-algebra.

\section{Completely positive maps associated with maximal tensor products}

The order unit of the dual spaces of operator systems cannot be
considered in general. However, the dual spaces of finite
dimensional operator systems have a non-canonical Archimedean
order unit. This enables the duality between tensor products and
mapping spaces to work in the proofs of the Choi-Effros-Kirchberg
theorem for operator systems \cite{HP} and the Lance theorem for
operator systems in the previous section. Not only is the finite
dimensional assumption restrictive, but also the matrix order unit
norm on the dual spaces of finite dimensional operator systems is
irrelevant to the matrix norm given by the standard dual of
operator spaces. To get rid of the finite dimensional assumption
and to reflect the operator space dual norm, we apply Werner's
unitization of matrix ordered operator spaces~\cite{W} to the dual
spaces of operator systems.

Let $V$ be a matrix ordered operator space. We give the involution
and the matrix order on $V \oplus \mathbb C$ as follows:
\begin{enumerate}
\item $(x+a)^* = x^*+ \bar{a}, \qquad x \in V, a \in \mathbb C$
\item $X + A \in M_n(V \oplus \mathbb C)^+$ iff $$A \in M_n^+
\quad \text{and} \quad \varphi((A+\varepsilon I_n)^{-{1 \over 2}}
X (A+\varepsilon I_n)^{-{1 \over 2}}) \ge -1$$ for any
$\varepsilon>0$ and any positive contractive functional $\varphi$
on $M_n(V)$.
\end{enumerate}
We denote by $\widetilde V$ the space $V \oplus \mathbb C$ with
the above involution and matrix order and call it the unitization
of $V$ \cite[Definition 4.7]{W}. The unitization $\widetilde V$ of
a matrix ordered operator space $V$ is an operator system and the
canonical inclusion $\iota : V \hookrightarrow \widetilde V$ is a
completely contractive complete order isomorphism onto its range
\cite[Lemma 4.8]{W}. However it need not be completely isomorphic.
We apply Werner's unitization of matrix ordered operator spaces to
the dual spaces of operator systems. In this case, the canonical
inclusion $\iota : \cl S^* \hookrightarrow \widetilde{\cl S^*}$ is
2-completely isomorphic \cite{Kar, H, KPTT1}.

\begin{lem}\label{unitization}
Suppose that $\cl S$ is an operator system and $\widetilde{\cl
S^*}$ is the unitization of the dual space $\cl S^*$. Let $f : \cl
S \to M_n$ be a self-adjoint linear map  and $A \in M_n^+$. Then
$f+A$ belongs to $M_n(\widetilde{\cl S^*})^+$ if and only if we
have $f_m(x) \ge - I_m \otimes A$ for all $m \in \mathbb N$ and $x
\in M_m(\cl S)_1^+$.
\end{lem}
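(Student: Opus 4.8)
The plan is to strip Werner's definition of the positive cone of $\widetilde{\cl S^*}$ down to a statement about positive contractive functionals on $M_n(\cl S^*)$, and then to identify those functionals explicitly. Throughout I use the standard completely isometric identification $M_n(\cl S^*)\cong CB(\cl S,M_n)$, under which an element is a linear map $f:\cl S\to M_n$, positivity means complete positivity, self-adjointness means $f(x^*)=f(x)^*$, and the $M_n$-bimodule action is $(bfc)(x)=b\,f(x)\,c$. For $m\in\mathbb N$, $x\in M_m(\cl S)^+$ and $\xi\in\mathbb C^m\otimes\mathbb C^n$, write $f_m:M_m(\cl S)\to M_{mn}$ for the $m$-th amplification and set $\varphi_{x,\xi}(f)=\langle f_m(x)\xi,\xi\rangle$. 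The reduction I aim for is the assertion that the positive contractive functionals on $M_n(\cl S^*)$ are precisely the $\varphi_{x,\xi}$ with $\|x\|\le 1$ and $\|\xi\|\le 1$, as $m$ runs over $\mathbb N$.

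That these $\varphi_{x,\xi}$ are positive and contractive is a direct estimate: complete positivity of $f$ gives $f_m(x)\ge 0$, whence $\varphi_{x,\xi}\ge 0$, while $|\varphi_{x,\xi}(f)|\le\|f_m(x)\|\,\|\xi\|^2\le\|f\|_{\mathrm{cb}}\,\|x\|\,\|\xi\|^2=\|f\|_{M_n(\cl S^*)}\,\|x\|\,\|\xi\|^2$ forces $\|\varphi_{x,\xi}\|\le 1$ once $\|x\|,\|\xi\|\le 1$. The converse — that every positive contractive functional arises this way — is the main obstacle, and I expect to extract it from the standard duality between completely positive maps and positive functionals on the matricial dual (the correspondence underlying \cite[Lemma 5.7]{KPTT1}): a positive functional is represented through a positive $x$ in some $M_m(\cl S)$ and a vector $\xi$, and rescaling $x\mapsto x/\|x\|$ against $\xi$ converts contractivity into the normalizations $\|x\|,\|\xi\|\le 1$. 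The point demanding care is that $\cl S$ is not assumed finite dimensional, so the representing $x$ must be allowed to live in $M_m(\cl S)$ for arbitrarily large $m$; this is exactly why the statement quantifies over all $m\in\mathbb N$.

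Granting the reduction, the remainder is a bimodule computation followed by a limiting argument (the clause $A\in M_n^+$ of Werner's definition being the hypothesis). Writing $B=(A+\varepsilon I_n)^{-{1\over 2}}$ and using $(BfB)_m(x)=(I_m\otimes B)f_m(x)(I_m\otimes B)$ together with $B(A+\varepsilon I_n)B=I_n$, one gets, for $\eta=(I_m\otimes B)\xi$,
$$\varphi_{x,\xi}\big((A+\varepsilon I_n)^{-{1\over 2}}f(A+\varepsilon I_n)^{-{1\over 2}}\big)=\langle f_m(x)\eta,\eta\rangle,\qquad \langle (I_m\otimes(A+\varepsilon I_n))\eta,\eta\rangle=\|\xi\|^2.$$
If the inequalities $f_m(x)\ge-I_m\otimes A$ hold, then $\langle f_m(x)\eta,\eta\rangle\ge-\langle(I_m\otimes A)\eta,\eta\rangle\ge-\|\xi\|^2\ge-1$, so Werner's condition holds for every positive contractive functional and $f+A\in M_n(\widetilde{\cl S^*})^+$. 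Conversely, assuming $f+A\in M_n(\widetilde{\cl S^*})^+$, fix $m$, $x\in M_m(\cl S)_1^+$ and $\eta\neq 0$; normalizing so that $\langle(I_m\otimes(A+\varepsilon I_n))\eta,\eta\rangle=1$ and taking $\xi=(I_m\otimes(A+\varepsilon I_n)^{1/2})\eta$ produces a unit vector, and Werner's condition gives $\langle f_m(x)\eta,\eta\rangle\ge-1$. By homogeneity this reads $\langle f_m(x)\eta,\eta\rangle\ge-\langle(I_m\otimes A)\eta,\eta\rangle-\varepsilon\|\eta\|^2$ for all $\eta$; letting $\varepsilon\to 0$ yields $f_m(x)\ge-I_m\otimes A$ for every $m$ and every $x\in M_m(\cl S)_1^+$. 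Thus the only genuinely nontrivial ingredient is the representation of positive contractive functionals from the first paragraph; everything else is the bimodule identity together with a homogeneity-and-$\varepsilon$ limiting argument.
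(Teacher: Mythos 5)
Your second and third paragraphs (the bimodule identity, the change of variables $\eta=(I_m\otimes(A+\varepsilon I_n)^{-1/2})\xi$, and the homogeneity-plus-$\varepsilon$ limiting argument) are correct, and they match the paper's computations; moreover the direction that deduces $f_m(x)\ge -I_m\otimes A$ from $f+A\in M_n(\widetilde{\cl S^*})^+$ is complete as you wrote it, since it only uses that each $\varphi_{x,\xi}$ \emph{is} a positive contractive functional on $M_n(\cl S^*)$, which you prove. The genuine gap is the reduction you yourself flag as the main obstacle and then assert: the positive contractive functionals on $M_n(\cl S^*)$ are \emph{not} precisely the $\varphi_{x,\xi}$ once $\cl S$ is infinite dimensional, and no duality will make them so. Already for $n=1$ one has $\varphi_{x,\xi}(f)=f(\xi^*x\xi)$ with $\xi^*x\xi\in \cl S^+_1$, so your family $\Omega=\{\varphi_{x,\xi}\}$ is exactly the canonical image of $\cl S_1^+$ in $\cl S^{**}$; the positive contractive functionals on $\cl S^*$, however, form the set $(\cl S^{**})^+\cap(\cl S^{**})_1$, which strictly contains that image for any infinite-dimensional operator system (take $\cl S=c$, the convergent sequences: a weak$^*$ cluster point of the truncations of $(1,0,1,0,\dots)$ is a positive contractive functional on $\cl S^*$ not given by evaluation at any element of $\cl S$). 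The correspondence you invoke from \cite[Lemma 5.7]{KPTT1} represents a positive functional on $M_n(\cl S^*)$ by an element of $M_n(\cl S^{**})^+$, not by some $x\in M_m(\cl S)^+$: the $\varphi_{x,\xi}$ are weak$^*$-continuous, while a general positive contractive functional need not be, and is only a weak$^*$ limit of convex combinations of such. Consequently, in the direction from the inequalities $f_m(x)\ge -I_m\otimes A$ to $f+A\in M_n(\widetilde{\cl S^*})^+$, you have verified Werner's condition only on $\Omega$, whereas the definition of the cone requires it for \emph{every} positive contractive functional on $M_n(\cl S^*)$.

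The repair is the weaker, correct statement, and it is exactly the nontrivial step in the paper's proof: the weak$^*$-closed convex hull $\Gamma_1$ of $\Omega$ equals the whole set $(M_n(\cl S^*)^*)^+_1$ of positive contractive functionals. Indeed, if some positive contractive $\varphi_0$ lay outside $\Gamma_1$, then by the Krein--Smulian theorem the weak$^*$-closed cone generated by $\Omega$ equals $\bb R^+\cdot\Gamma_1$, so $\varphi_0$ lies outside that cone, and Hahn--Banach separation in the weak$^*$ topology produces a self-adjoint $f_0\in M_n(\cl S^*)$ with $\varphi_{x,\xi}(f_0)\ge 0$ for all $m$, $x\in M_m(\cl S)^+_1$, $\xi$, and $\varphi_0(f_0)<0$; the first condition says $(f_0)_m(x)\ge 0$ for all positive $x$, i.e.\ $f_0\in CP(\cl S,M_n)=M_n(\cl S^*)^+$, which contradicts $\varphi_0(f_0)<0$ because $\varphi_0$ is positive. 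This weaker statement suffices to close your argument: for the fixed element $g=(A+\varepsilon I_n)^{-1/2}f(A+\varepsilon I_n)^{-1/2}$, the set $\{\varphi : \varphi(g)\ge -1\}$ is convex and weak$^*$-closed, so once it contains $\Omega$ it contains $\Gamma_1$, hence all positive contractive functionals. With that single insertion your proposal becomes, in substance, the paper's proof.
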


\begin{proof}
$\Rightarrow )$ The element $f+A$ belongs to $M_n(\widetilde{\cl
S^*})^+$ if and only if
$$\varphi((A+\varepsilon I_n )^{-{1 \over 2}} f (A+\varepsilon
I_n)^{-{1 \over 2}}) \ge -1$$ for any $\varepsilon
>0$ and any positive contractive functional $\varphi$ on
$M_n(\cl S^*)$. For $x \in M_m(\cl S)_1^+$ and $\xi \in
(\ell^2_{mn})_1$, the map $$\varphi_{x, \xi} : f \in M_n(\cl
S^*)=CB(\cl S,M_n) \mapsto \langle f_m(x) \xi | \xi \rangle \in
\mathbb C$$ is a positive contractive functional on $M_n(\cl
S^*)$. It follows that $$\begin{aligned} & \langle (I_m \otimes
(A+\varepsilon I_n)^{-{1 \over 2}})f_m(x)(I_m \otimes
(A+\varepsilon I_n)^{-{1 \over 2}}) \xi | \xi \rangle \\ = &
\varphi_{x,\xi}((A+\varepsilon I_n )^{-{1 \over 2}}\ f\
(A+\varepsilon I_n)^{-{1 \over 2}}) \\ \ge & -1.
\end{aligned}$$ Hence, we have $f_m(x) \ge - I_m
\otimes A$.

\vskip 1pc

$\Leftarrow )$ Put $\Omega = \{ \varphi_{x,\xi} : m \in \bb N, x
\in M_m(\cl S)_1^+, \xi \in (\ell^2_{mn})_1 \}$ where
$\varphi_{x,\xi}$ defined as above. Let $\Gamma_1$ be a
weak$^*$-closed convex hull of $\Omega$ and $\Gamma_2$ a
weak$^*$-closed cone generated by $\Omega$. We want $\Gamma_1 =
(M_n(\cl S^*)^*)^+_1$. Here, $(M_n(\cl S^*)^*)^+_1$ denotes the
set of positive contractive functionals on $M_n(\cl S^*)$. If this
were not the case, we could choose $\varphi_0 \in (M_n(\cl
S^*)^*)^+_1 \slash \Gamma_1$. By the Krein-Smulian
theorem~\cite[Theorem 5.12.1]{C}, we have $\bb R^+ \cdot \Gamma_1
= \Gamma_2$, thus $\varphi_0$ does not belong to $\Gamma_2$. By
the Hahn-Banach separation theorem, there exists $f_0 \in M_n(\cl
S^*)_{sa}$ which separates $\Gamma_2$ and $\varphi_0$ strictly. We
have $\Gamma_2 (f_0) = \{ 0 \}$ or $[0,\infty)$ or $(-\infty, 0]$.
If $\varphi_{x,\xi} (f_0) = 0$ for all $\varphi_{x,\xi} \in
\Omega$, then we have $f_0=0$. We may assume that
$\Gamma_2(f_0)=[0,\infty)$. Then we have $f_0 \in M_n(\cl S^*)^+$
which is a contradiction since $f_0$ separates $\Gamma_2$ and
$\varphi_0$ strictly. The conclusion follows from $\Gamma_1 =
(M_n(\cl S^*)^*)^+_1$ and
$$\varphi_{x,\xi}((A+\varepsilon I_n )^{-{1 \over 2}}\ f\
(A+\varepsilon I_n)^{-{1 \over 2}}) \ge -1.$$
\end{proof}

\begin{prop}\label{min}
Suppose that $\Phi : \cl S \to \cl T$ is a finite rank map for
operator systems $\cl S$ and $\cl T$. Then $\Phi$ is completely
positive if and only if it belongs to $(\widetilde{\cl S^*}
\otimes_{\min} \cl T)^+$.
\end{prop}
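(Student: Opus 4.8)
The plan is to identify the finite rank map $\Phi$ with a tensor $\sum_i f_i \otimes t_i \in \cl S^* \otimes \cl T \subset \widetilde{\cl S^*} \otimes \cl T$, using the complete order embedding $\cl S^* \hookrightarrow \widetilde{\cl S^*}$ from \cite{W}, and then to run both implications through a single ``slicing'' reformulation of each side in terms of the compositions $\psi \circ \Phi$, where $\psi : \cl T \to M_m$ ranges over the matricial states (unital completely positive maps) of $\cl T$. The only nontrivial inputs are Lemma~\ref{unitization} and the spatial description of the minimal tensor product; everything else is bookkeeping.

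First I would record the elementary slice identity: for a unital completely positive $\psi : \cl T \to M_m$, the element $({\rm id}_{\widetilde{\cl S^*}} \otimes \psi)(\Phi) = \sum_i f_i \otimes \psi(t_i)$, viewed as a map $\cl S \to M_m$, is exactly $\psi \circ \Phi$, an element of $M_m(\cl S^*) \subset M_m(\widetilde{\cl S^*})$. Next, since positivity in the operator system $\widetilde{\cl S^*}$ is detected by its matricial states $\varphi \in S_k(\widetilde{\cl S^*})$, and since applying $\varphi$ to the first leg of $({\rm id} \otimes \psi)(\Phi)$ returns $(\varphi \otimes \psi)(\Phi)$, the two-sided testing in the definition of $M_1(\widetilde{\cl S^*} \otimes_{\min} \cl T)^+$ collapses to the one-sided condition
$$\Phi \in (\widetilde{\cl S^*} \otimes_{\min} \cl T)^+ \iff \psi \circ \Phi \in M_m(\widetilde{\cl S^*})^+ \ \text{ for all } \psi \in S_m(\cl T),\ m \in \bb N.$$

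The last ingredient is Lemma~\ref{unitization} applied with $A = 0$: a self-adjoint $g : \cl S \to M_m$ lies in $M_m(\widetilde{\cl S^*})^+$ if and only if $g_\ell(x) \ge 0$ for every $\ell$ and every $x \in M_\ell(\cl S)_1^+$, that is, if and only if $g$ is completely positive. Taking $g = \psi \circ \Phi$ (which is self-adjoint whenever $\Phi$ is, and a positive element of the minimal tensor product is automatically self-adjoint), the displayed equivalence becomes: $\Phi \in (\widetilde{\cl S^*} \otimes_{\min} \cl T)^+$ if and only if $\psi \circ \Phi$ is completely positive for every matricial state $\psi$ of $\cl T$. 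To finish I would invoke the standard fact that $\Phi$ itself is completely positive exactly when every $\psi \circ \Phi$ is: the forward direction is composition of completely positive maps, while the converse tests $\Phi_n(x) \in M_n(\cl T)^+$ through the maps $({\rm id}_n \otimes \psi)(\Phi_n(x)) = (\psi \circ \Phi)_n(x)$. Chaining the two equivalences yields the proposition.

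I expect the only delicate point to be this reformulation of the minimal cone, namely matching the two-sided testing built into the definition of $\otimes_{\min}$ with one-sided positivity in $M_m(\widetilde{\cl S^*})$ and checking self-adjointness, rather than any genuine analytic difficulty. Once Lemma~\ref{unitization} is specialized to $A = 0$, complete positivity of $\psi \circ \Phi$ and membership in $M_m(\widetilde{\cl S^*})^+$ are literally the same statement, so the whole argument is an assembly of known facts.
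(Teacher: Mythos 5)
Your proposal is correct, and while it lives in the same circle of ideas as the paper's proof (both hinge on Lemma~\ref{unitization} and on slicing against matricial states), it is organized along a genuinely different route. The paper treats the two implications asymmetrically: for the forward direction it tests $\Phi$ against $f \otimes g$ with $f \in S_k(\widetilde{\cl S^*})$, identifies the restriction $f|_{\cl S^*}$ with an element of $CP(\cl S^*, M_k) = M_k(\cl S^{**})^+$, and approximates that element in the point-norm topology by evaluation maps ${\rm ev}_x$, $x \in M_k(\cl S)^+$ --- a bidual density argument; for the backward direction it uses the easy implication of Lemma~\ref{unitization} to show that the unitized evaluations ${\rm ev}_x : \widetilde{\cl S^*} \to M_n$ are unital completely positive and then slices with ${\rm ev}_x \otimes g$. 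You instead run both implications through one symmetric chain: $\Phi$ is min-positive $\iff$ every slice $\psi \circ \Phi$ lies in $M_m(\widetilde{\cl S^*})^+$ $\iff$ every $\psi \circ \Phi$ is completely positive $\iff$ $\Phi$ is completely positive, where the middle step is Lemma~\ref{unitization} with $A = 0$ (you need \emph{both} implications of that lemma, including the harder Hahn--Banach/Krein--Smulian direction, which the paper never invokes here) and the outer steps use the standard fact that matricial states detect positivity at every matrix level of an operator system. What your route buys: no appeal to $\cl S^{**}$, no approximation or limit argument, and a single self-contained equivalence; the cost is leaning on the separation-theorem direction of Lemma~\ref{unitization} where the paper gets by with density of evaluations. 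Your handling of the two delicate points is sound: the slices $\psi \circ \Phi$ are self-adjoint in the relevant sense (a positive element of an operator system is self-adjoint, and the involution of $\widetilde{\cl S^*}$ restricts to that of $\cl S^*$), and the collapse of the two-sided testing in the definition of $\otimes_{\min}$ to one-sided positivity of the slices is exactly the matricial-state detection fact applied to $M_m(\widetilde{\cl S^*})$.
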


\begin{proof}
$\Rightarrow )$ The finite rank map $\Phi$ can be regarded as an
element in $\cl S^* \otimes \cl T \subset \widetilde{\cl S^*}
\otimes \cl T$. For a positive element $x$ in $M_n(\cl S)$, the
evaluation map ${\rm ev}_x : {\cl S}^* \to M_n$ defined by
$${\rm ev}_x(f) = f_n(x), \qquad f \in \cl S^*$$ is
completely positive. For a completely positive map $g : \cl T \to
M_m$, we have
$$({\rm ev}_x \otimes g)(\Phi) = (g \circ \Phi)_n(x) \in
M_{mn}^+, \qquad ({\rm ev}_x \otimes g : \cl S^* \otimes \cl T \to
M_{mn}).$$ For a completely positive map $f : \widetilde{\cl S^*}
\to M_n$, its restriction $f|_{\cl S^*}$ belongs to $CP(\cl S^*,
M_n) = M_n(\cl S^{**})^+$. It is the point-norm limit of
evaluation maps ${\rm ev}_x$ for $x \in M_n(\cl S)^+$. Thus, we
have $$(f \otimes g)(\Phi) = (f|_{\cl S^*} \otimes g)(\Phi) \in
M_{mn}^+.$$ In other words, $\Phi$ belongs to $(\widetilde{\cl
S^*} \otimes_{\min} \cl T)^+$.

\vskip 1pc

$\Leftarrow )$ For $x \in M_n(\cl S)^+_1$, we define an evaluation
map ${\rm ev}_x : \widetilde{\cl S^*} \to M_n$ by
$${\rm ev}_x (f+\lambda) = f_n(x) + \lambda I_n.$$ By Lemma
\ref{unitization}, the evaluation map ${\rm ev}_x : \widetilde{\cl
S^*} \to M_n$ is completely positive.

From $$(g \circ \Phi)_n(x) = ({\rm ev}_x \otimes g)(\Phi) \in
M_{mn}^+$$ we see that $\Phi : \cl S \to \cl T$ is completely
positive.
\end{proof}

By Proposition \ref{min}, finite rank completely positive maps
from $\cl S$ to $\cl T$ correspond to the elements in $\cl S^*
\otimes \cl T \cap (\widetilde{\cl S^*} \otimes_{\min} \cl T)^+$.
Our next goal is to study completely positive maps corresponding
to the elements in $\cl S^* \otimes \cl T \cap (\widetilde{\cl
S^*} \otimes_{\max} \cl T)^+$.
\begin{thm}\label{nuclear}
Suppose that $\Phi : \cl S \to \cl T$ is a finite rank map. Then
$\Phi$ belongs to $(\widetilde{\cl S^*} \otimes_{\max} \cl T)^+$
if and only if for any $\varepsilon>0$, there exist a
factorization $\Phi = \psi \circ \varphi$ and a positive
semidefinite matrix $A \in M_p$ such that $\varphi : \cl S \to
M_p$ is a self-adjoint map with $\varphi_m(x) \ge -I_m \otimes A$
for all $m \in \mathbb N, x \in M_m(\cl S)_1^+$ and $\psi : M_p
\to \cl T$ is a completely positive map with $\psi(A) =
\varepsilon 1_{\cl T}$.
$$\xymatrix{\cl S \ar[rr]^\Phi \ar[dr]_\varphi && \cl T \\
& M_p \ar[ur]_\psi &}$$
\end{thm}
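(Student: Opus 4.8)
The plan is to prove both directions by translating the maximality condition on $\widetilde{\cl S^*}\otimes_{\max}\cl T$ into a factorization statement via the structure of $D_n^{\max}$ together with Lemma~\ref{unitization} and Proposition~\ref{min}. For the forward direction, I would start from the characterization
$$M_1(\widetilde{\cl S^*}\otimes_{\max}\cl T)^+=\{X:\forall\varepsilon>0,\ X+\varepsilon\,1_{\widetilde{\cl S^*}}\otimes 1_{\cl T}\in D_1^{\max}(\widetilde{\cl S^*},\cl T)\},$$
so that $\Phi+\varepsilon(1_{\cl S^*}\mbox{-unit})\otimes 1_{\cl T}$ lies in $D_1^{\max}$. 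Writing a typical element of $D_1^{\max}(\widetilde{\cl S^*},\cl T)$ as $\alpha(P\otimes Q)\alpha^*$ with $P\in M_k(\widetilde{\cl S^*})^+$, $Q\in M_l(\cl T)^+$, $\alpha\in M_{1,kl}$, I would absorb $\alpha$ and $Q$ into a single completely positive map. The idea is that $P\in M_k(\widetilde{\cl S^*})^+$ corresponds, via the evaluation-map duality of Proposition~\ref{min} and Lemma~\ref{unitization}, to a self-adjoint $\varphi:\cl S\to M_p$ with $\varphi_m(x)\ge -I_m\otimes A$ for some $A\in M_p^+$, while the compression by $\alpha$ and the factor $Q$ together assemble into a completely positive $\psi:M_p\to\cl T$. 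The $\varepsilon$ in the Archimedeanization should become precisely the defect $\psi(A)=\varepsilon 1_{\cl T}$, which is why the factorization is allowed to depend on $\varepsilon$.

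For the reverse direction, given for each $\varepsilon>0$ a factorization $\Phi=\psi\circ\varphi$ with $\varphi_m(x)\ge -I_m\otimes A$ and $\psi(A)=\varepsilon 1_{\cl T}$, I would read $\varphi$ through Lemma~\ref{unitization}: the condition $\varphi_m(x)\ge -I_m\otimes A$ says exactly that $\varphi+A\in M_p(\widetilde{\cl S^*})^+$, i.e. $\varphi+A$ is a positive element, hence of the form $P\in M_p(\widetilde{\cl S^*})^+$ viewed appropriately. Then applying $\mathrm{id}\otimes\psi$ (completely positive, so it preserves $D^{\max}$ by functoriality (T4)) and using $\psi(A)=\varepsilon 1_{\cl T}$ to account for the shift, I would obtain $\Phi+\varepsilon(\text{unit})\otimes 1_{\cl T}\in D_1^{\max}(\widetilde{\cl S^*},\cl T)$ for every $\varepsilon>0$, which is the defining Archimedean condition placing $\Phi$ in $(\widetilde{\cl S^*}\otimes_{\max}\cl T)^+$.

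The main obstacle I anticipate is the bookkeeping in the forward direction: one must pass cleanly between the three equivalent encodings of a positive element of $M_k(\widetilde{\cl S^*})$ — as an abstract positive matrix over the unitization, as a self-adjoint map $\cl S\to M_k$ dominated by a fixed $A$ (Lemma~\ref{unitization}), and as data participating in the tensor $P\otimes Q$ — and then verify that the compression $\alpha(\,\cdot\,)\alpha^*$ composed with the $Q$-factor really does yield a genuinely completely positive map $\psi:M_p\to\cl T$ with the exact normalization $\psi(A)=\varepsilon 1_{\cl T}$ rather than merely $\psi(A)\le\varepsilon 1_{\cl T}$. Extracting the correct $A$ and $p=k$ so that the defect matches $\varepsilon$ on the nose, and confirming self-adjointness of $\varphi$ (replacing $\varphi$ by $(\varphi+\varphi^*)/2$ if necessary, as in the proof of Proposition~\ref{quotient map}), is where the care is needed; the positivity transfers under $\psi$ automatically once the maps are correctly identified.
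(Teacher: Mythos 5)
Your proposal follows the paper's proof essentially step for step: your forward direction is the paper's decomposition $\Phi + \varepsilon\, 1_{\widetilde{\cl S^*}} \otimes 1_{\cl T} = \alpha((\varphi + A\cdot 1_{\widetilde{\cl S^*}})\otimes Q)\alpha^*$ combined with Lemma~\ref{unitization} and $\psi(B)=\alpha(B\otimes Q)\alpha^*$, and your backward direction (push $\varphi + A \in M_p(\widetilde{\cl S^*})^+$ forward by ${\rm id}\otimes\psi$) is exactly what the paper carries out concretely with the Choi matrix $Q=[\psi(e_{i,j})]$ and $\alpha=[e_1,\dots,e_p]$. The obstacle you flag resolves itself just as you suspect: since $\widetilde{\cl S^*}\otimes \cl T = (\cl S^*\otimes\cl T)\oplus(\bb C\otimes\cl T)$ as vector spaces, the displayed equation splits into the two component equations $\Phi=\alpha(\varphi\otimes Q)\alpha^*=\psi\circ\varphi$ and $\varepsilon 1_{\cl T}=\alpha(A\otimes Q)\alpha^*=\psi(A)$, so the normalization holds on the nose, and self-adjointness of $\varphi$ and positivity of $A$ come for free from the involution on the unitization.
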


\begin{proof}
$\Rightarrow )$ Let $\Phi \in (\widetilde{\cl S^*} \otimes_{\max}
\cl T)^+$. For any $\varepsilon > 0$, we can write $$\Phi +
\varepsilon 1_{\widetilde{\cl S^*}} \otimes 1_{\cl T} = \alpha
((\varphi+A \cdot 1_{\widetilde{\cl S^*}}) \otimes Q) \alpha^*$$
for $\alpha \in M_{1,pq}, \varphi \in M_p(\cl S^*), A \in M_p, Q
\in M_q(\cl T)^+$ and $\varphi + A \cdot 1_{\widetilde{\cl S^*}}
\in M_p(\widetilde{S^*})^+$. It follows that $$\Phi(x) = \alpha
\varphi(x) \otimes Q \alpha^* \quad \text{and} \quad \varepsilon
1_{\cl T} = \alpha (A \otimes Q) \alpha^*.$$ By Lemma
\ref{unitization}, we have
$$\varphi_n(x) \ge -I_n \otimes A,\qquad n \in \mathbb N, x \in
M_n(\cl S)^+_1.$$ We define a completely positive map $\psi : M_p
\to \cl T$ by $\psi(B)=\alpha (B \otimes Q) \alpha^*$. Then we
have $$\Phi(x) = \alpha \varphi(x) \otimes Q \alpha^* = \psi
(\varphi (x)) \quad \text{and} \quad \psi(A) = \alpha (A \otimes
Q) \alpha^* = \varepsilon 1_{\cl T}.$$

\vskip 1pc

$\Leftarrow )$ We put $$Q = [\psi(e_{i,j})]_{1 \le i,j \le p} \in
M_p (\cl T)^+ \quad \text{and} \quad \alpha = [e_1, \cdots, e_p]
\in M_{1, p^2}.$$ Because
$$\psi([b_{i,j}]) = \psi(\sum_{i,j=1}^p b_{i,j} e_{i,j}) =
\sum_{i,j=1}^p b_{i,j} Q_{i,j} = \alpha ([b_{i,j}] \otimes Q)
\alpha^*,$$ we can write $$\Phi(x) = \alpha \varphi(x) \otimes Q
\alpha^* \quad \text{and} \quad \varepsilon 1_{\cl T} = \alpha (A
\otimes Q) \alpha^*.$$ By Lemma \ref{unitization}, we have
$\varphi+A \in M_n(\widetilde{\cl S^*})^+$. It follows that
$$\Phi + \varepsilon 1_{\widetilde{\cl S^*}} \otimes
1_{\cl T} = \alpha ((\varphi+A \cdot 1_{\widetilde{\cl S^*}})
\otimes Q) \alpha^* \in (\widetilde{\cl S^*} \otimes_{\max} \cl
T)^+$$
\end{proof}

Let $\Phi : \cl S \to \cl T$ be a completely positive map
factoring through a matrix algebra in a completely positive way.
By the proof of Theorem \ref{nuclear}, $\Phi$ corresponds to an
element in the subcone $$\{ \alpha (\varphi \otimes Q) \alpha^* :
\alpha \in M_{1,pq}, \varphi \in M_p(\cl S^*)^+, Q \in M_q(\cl
T)^+ \}$$ of the cone $\cl S^* \otimes \cl T \cap (\widetilde{\cl
S^*} \otimes_{\max} \cl T)^+$.

\begin{thm}
Suppose that $\Phi : \cl S \to \cl T$ is a completely positive map
for operator systems $\cl S$ and $\cl T$. The map
$${\rm id}_{\cl R} \otimes \Phi : \cl R \otimes_{\min}
\cl S \to \cl R \otimes_{\max} \cl T$$ is completely positive for
any operator system $\cl R$ if and only if we have
$$\Phi|_E \in (\widetilde{E^*} \otimes_{\max} \cl T)^+$$ for
any finite dimensional operator subsystem $E$ of $\cl S$.
\end{thm}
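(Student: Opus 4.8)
The plan is to establish both implications by testing the completely positive condition against the cone structure and exploiting the duality between finite rank completely positive maps and elements of the unitized dual, as developed in Proposition~\ref{min} and Theorem~\ref{nuclear}. The guiding principle is that positivity of ${\rm id}_{\cl R}\otimes\Phi$ on $\cl R\otimes_{\min}\cl S$ is equivalent to a statement about how $\Phi$ interacts with all positive elements of the minimal tensor product, which in turn can be localized to finite dimensional operator subsystems of $\cl S$ by an approximation argument.

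For the forward direction, suppose ${\rm id}_{\cl R}\otimes\Phi:\cl R\otimes_{\min}\cl S\to\cl R\otimes_{\max}\cl T$ is completely positive for every operator system $\cl R$. The key idea is to choose $\cl R$ cleverly: for a finite dimensional operator subsystem $E\subset\cl S$, I would take $\cl R=\widetilde{E^*}$, the unitization of the dual. One then considers the canonical element of $\widetilde{E^*}\otimes E$ corresponding to the inclusion $E\hookrightarrow\cl S$ (or to ${\rm id}_E$), which by Proposition~\ref{min} lies in $(\widetilde{E^*}\otimes_{\min}E)^+$ since ${\rm id}_E$ is completely positive. Applying ${\rm id}_{\widetilde{E^*}}\otimes\Phi$ sends this element to the element of $\widetilde{E^*}\otimes\cl T$ corresponding to $\Phi|_E$, and the hypothesis forces it to lie in $(\widetilde{E^*}\otimes_{\max}\cl T)^+$, which is exactly the desired conclusion. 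The functoriality of $\min$ and the identification of finite rank maps with tensor elements are what make this work.

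For the converse, assume $\Phi|_E\in(\widetilde{E^*}\otimes_{\max}\cl T)^+$ for every finite dimensional $E\subset\cl S$. To show ${\rm id}_{\cl R}\otimes\Phi$ is completely positive, I would take a positive element $z\in M_n(\cl R\otimes_{\min}\cl S)^+$ and show $({\rm id}_{\cl R}\otimes\Phi)_n(z)\in M_n(\cl R\otimes_{\max}\cl T)^+$. Since $z$ involves only finitely many tensor factors from $\cl S$, it lives in $M_n(\cl R\otimes_{\min}E)$ for some finite dimensional $E$. The plan is to test positivity in the maximal tensor product by pairing against states, using the duality for the minimal tensor product on the $\cl R$ side (where $\min$ is injective and well-behaved) together with the factorization guaranteed by Theorem~\ref{nuclear}: the hypothesis $\Phi|_E\in(\widetilde{E^*}\otimes_{\max}\cl T)^+$ yields, for each $\varepsilon>0$, a factorization $\Phi|_E=\psi\circ\varphi$ through a matrix algebra $M_p$ with control on the defect $A$. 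One then reduces the problem to the statement that ${\rm id}_{\cl R}\otimes\varphi$ lands in a manageable cone and ${\rm id}_{\cl R}\otimes\psi$ preserves maximal positivity, the latter following because $\psi$ is completely positive into $\cl T$ and the maximal tensor product is functorial.

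The main obstacle I expect is the converse direction, specifically controlling the $\varepsilon$-defect in the factorization from Theorem~\ref{nuclear} and making the limiting argument rigorous. The factorization $\Phi|_E=\psi\circ\varphi$ only holds approximately, with $\psi(A)=\varepsilon 1_{\cl T}$, so $\varphi$ is merely self-adjoint rather than completely positive, and one must argue that the error term contributes at most $\varepsilon$ times an order unit to $({\rm id}_{\cl R}\otimes\Phi)_n(z)$ in the maximal tensor norm. Getting the estimates to combine so that the $\varepsilon$ can be absorbed into the Archimedean defining condition of $M_n(\cl R\otimes_{\max}\cl T)^+$ is the delicate technical heart of the proof; the condition $\varphi_m(x)\ge -I_m\otimes A$ from Lemma~\ref{unitization} is precisely what lets one bound the negative part of $({\rm id}_{\cl R}\otimes\varphi)$ on positive elements of $\cl R\otimes_{\min}E$, and threading this bound through $\psi$ is where the argument must be handled with care.
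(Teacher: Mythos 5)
Your forward direction is correct and is essentially the paper's own argument: with $\cl R=\widetilde{E^*}$, Proposition~\ref{min} identifies the inclusion $\iota : E\hookrightarrow \cl S$ (a finite rank completely positive map) with an element of $(\widetilde{E^*}\otimes_{\min}\cl S)^+$, and applying ${\rm id}_{\widetilde{E^*}}\otimes\Phi$ to it produces exactly $\Phi|_E$ inside $(\widetilde{E^*}\otimes_{\max}\cl T)^+$. (You place the canonical element in $(\widetilde{E^*}\otimes_{\min}E)^+$ rather than $(\widetilde{E^*}\otimes_{\min}\cl S)^+$; this is harmless because $\min$ is injective, but note the hypothesis is about maps defined on $\cl R\otimes_{\min}\cl S$, so the element must ultimately be viewed there.)

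The converse, however, has a genuine gap, located exactly where you say the ``delicate technical heart'' lies: you never supply the mechanism that converts the hypothesis on $\varphi$ --- which is a statement about \emph{matrix} amplifications, namely $\varphi_m(x)\ge -I_m\otimes A$ for $x\in M_m(E)_1^+$ --- into the inequality $({\rm id}_{\cl R}\otimes\varphi)(z)\ge -1_{\cl R}\otimes A$ inside $\cl R\otimes M_k$ for an arbitrary, generally infinite dimensional, operator system $\cl R$. ``Pairing against states'' does not do this directly, because the hypothesis controls only the behavior of $\varphi$ against matrix algebras, not against arbitrary positive functionals on $\cl R\otimes_{\min}E$. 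The paper's resolution is a compression argument: realize $\cl R\subset B(H)$ concretely, and for each finite rank projection $P$ on $H$ observe that the compression of $z$ in the first factor is a positive contraction in $P\cl R P\otimes_{\min}E\subset M_p\otimes_{\min}E=M_p(E)$ (injectivity of $\min$ is used here), where the hypothesis applies verbatim and gives $(P\otimes I_k)\,\bigl(({\rm id}_{\cl R}\otimes\varphi)(z)\bigr)\,(P\otimes I_k)\ge -P\otimes A$; since positivity in $\cl R\otimes M_k\subset B(H\otimes\ell^2_k)$ can be tested on vectors, letting $P$ increase to $I$ yields the desired bound. Once that bound is in hand, the rest of your outline is sound and matches the paper: ${\rm id}_{\cl R}\otimes\psi : \cl R\otimes_{\min}M_k=\cl R\otimes_{\max}M_k\to\cl R\otimes_{\max}\cl T$ is completely positive by functoriality of $\max$, so $({\rm id}_{\cl R}\otimes\Phi)(z)\ge -({\rm id}_{\cl R}\otimes\psi)(1_{\cl R}\otimes A)=-\varepsilon\, 1_{\cl R}\otimes 1_{\cl T}$, and the Archimedean property of the maximal cone absorbs the arbitrary $\varepsilon>0$. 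You should also record the (easy, but necessary) upgrade from positivity to complete positivity, using $M_n(\cl R\otimes_{\min}\cl S)\cong M_n(\cl R)\otimes_{\min}\cl S$ and $M_n(\cl R\otimes_{\max}\cl T)\cong M_n(\cl R)\otimes_{\max}\cl T$ together with the fact that the hypothesis quantifies over all operator systems $\cl R$.
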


\begin{proof}
$\Rightarrow )$ By Proposition \ref{min}, we can regard the
inclusion $\iota : E \subset \cl S$ as an element in
$(\widetilde{E^*} \otimes_{\min} \cl S)^+$. By assumption, we see
that $\Phi|_E = ({\rm id}_{\widetilde{E^*}} \otimes \Phi)(\iota)$
belongs to $(\widetilde{E^*} \otimes_{\max} \cl T)^+$.

\vskip 1pc

$\Leftarrow )$ We choose an element $$z = \sum_{i=1}^n x_i \otimes
y_i \in (\cl R \otimes_{\min} \cl S)^+_1.$$ Let $E$ be a finite
dimensional operator subsystem of $\cl S$ which contains $\{y_i :
1 \le i \le n \}$. By Theorem \ref{nuclear}, there exist a
factorization $\Phi|_E = \psi \circ \varphi$ and a positive
semidefinite matrix $A \in M_n$ such that $\varphi : E \to M_n$ is
a self-adjoint map with $\varphi_m(x) \ge -I_m \otimes A$ for all
$m \in \mathbb N, x \in M_m(\cl S)_1^+$ and $\psi : M_n \to \cl T$
is a completely positive map with $\psi(A) = \varepsilon 1_{\cl
T}$. Let $\cl R$ be a concrete operator system acting on a Hilbert
space $H$ and $P$ the projection onto the finite dimensional
subspace of $H$. The compression $P \cl R P$ is the operator
subsystem of a matrix algebra $M_p$ for $p = {\rm rank} P$. From
the commutative diagram
$$\xymatrix{M_p \otimes_{\min} E \ar[rr]^{{\rm id}_{M_p}
\otimes \varphi}
&& M_p \otimes_{\min} M_n \\
P \cl R P \otimes_{\min} E \ar@{^{(}->}[u] \ar[rr]^{{\rm id}_{P
\cl R P} \otimes \varphi} && P \cl R P \otimes_{\min} M_n
\ar@{^{(}->}[u]}$$ we see that
$$({\rm id}_{\cl R} \otimes \varphi) (z) \ge -
1_{\cl R} \otimes A.$$ From the commutative diagram
$$\xymatrix{\cl R \otimes_{\min} \cl S \ar[rr]^{{\rm id}_{\cl R}
\otimes \Phi} && \cl R \otimes_{\max} \cl T \\
\cl R \otimes_{\min} E \ar@{^{(}->}[u] \ar[rr]^-{{\rm id}_{\cl R}
\otimes \varphi} && \cl R \otimes_{\min} M_n = \cl R
\otimes_{\max} M_n \ar[u]_{{\rm id}_{\cl R} \otimes \psi}}$$ we
also see that
$$\begin{aligned} ({\rm id}_{\cl R} \otimes \Phi)(z) & =
({\rm id}_{\cl R} \otimes \psi) \circ ({\rm id}_{\cl R} \otimes
\varphi)(z) \\ & \ge -\varepsilon ({\rm id}_{\cl R} \otimes
\psi)(1_{\cl R} \otimes A) \\ & = - \varepsilon 1_{\cl R} \otimes
1_{\cl T} \end{aligned}$$ in $\ \cl R \otimes_{\max} \cl T$. Since
the choice of $\varepsilon>0$ is arbitrary, we conclude that the
map
$${\rm id}_{\cl R} \otimes \Phi : \cl R \otimes_{\min}
\cl S \to \cl R \otimes_{\max} \cl T$$ is positive.

\end{proof}

\section{Duality}

We establish the duality between complete order embeddings and
complete order quotient maps.

\begin{thm}
Suppose that $\cl S$ and $\cl T$ are operator systems with
complete norms and $\Phi : \cl S \to \cl T$ is a unital completely
positive surjection. Then $\Phi : \cl S \to \cl T$ is a complete
order quotient map if and only if its dual map $\Phi^* : \cl T^*
\to \cl S^*$ is a complete order embedding.
\end{thm}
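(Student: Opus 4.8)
The plan is to recast everything through the dual matrix cones. Recall that $M_n(\cl S^*)=\mathrm{CB}(\cl S,M_n)$ carries the positive cone $M_n(\cl S^*)^+=\mathrm{CP}(\cl S,M_n)$, and likewise for $\cl T$; under these identifications $(\Phi^*)_n(G)=G\circ\Phi$ for $G\in\mathrm{CB}(\cl T,M_n)$. Since $\Phi$ is completely positive, $(\Phi^*)_n$ automatically sends $\mathrm{CP}(\cl T,M_n)$ into $\mathrm{CP}(\cl S,M_n)$, so $\Phi^*$ is completely positive, and since $\Phi$ is surjective, $\Phi^*$ is injective. Thus $\Phi^*$ is a complete order embedding exactly when it is \emph{order reflecting}: for every $n$ and every $G\in\mathrm{CB}(\cl T,M_n)$, if $G\circ\Phi\in\mathrm{CP}(\cl S,M_n)$ then $G\in\mathrm{CP}(\cl T,M_n)$. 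I would establish the two implications separately.

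For the forward implication, assume $\Phi$ is a complete order quotient map and let $G\in\mathrm{CB}(\cl T,M_n)$ satisfy $G\circ\Phi\in\mathrm{CP}(\cl S,M_n)$. To see that $G$ is completely positive, fix $Q\in M_m(\cl T)^+$ and $\varepsilon>0$, and lift $Q$ to $P\in M_m(\cl S)$ with $\Phi_m(P)=Q$ and $P+\varepsilon I_m\otimes 1_{\cl S}\in M_m(\cl S)^+$. Applying the completely positive map $G\circ\Phi$ and using $\Phi(1_{\cl S})=1_{\cl T}$ gives $G_m(Q)+\varepsilon\,I_m\otimes G(1_{\cl T})=(G\circ\Phi)_m(P+\varepsilon I_m\otimes 1_{\cl S})\ge 0$. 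Letting $\varepsilon\to 0$ and using that $M_{mn}^+$ is closed yields $G_m(Q)\ge 0$, so $G\in\mathrm{CP}(\cl T,M_n)$.

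For the converse I would argue by contraposition, via Proposition \ref{first}. If $\Phi$ is not a complete order quotient map, then $\widetilde\Phi:\cl S/\ker\Phi\to\cl T$ is not a complete order isomorphism; being a unital completely positive bijection, its inverse fails to be completely positive, so there are $n$ and $Q_0\in M_n(\cl T)^+$ whose preimage $[P_0]=\widetilde\Phi_n^{-1}(Q_0)$ lies in $M_n(\cl R)_{\mathrm{sa}}\setminus M_n(\cl R)^+$, where $\cl R=\cl S/\ker\Phi$. Because the cones of the operator system $\cl R$ are closed, Hahn--Banach separation produces a positive functional on $M_n(\cl R)$ that is strictly negative at $[P_0]$; under the standard correspondence between positive functionals on $M_n(\cl R)$ and completely positive maps $\cl R\to M_n$, this yields $\Theta\in\mathrm{CP}(\cl R,M_n)$ with $\langle\Theta,[P_0]\rangle<0$ for the canonical pairing of $M_n(\cl R^*)$ with $M_n(\cl R)$. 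I then set $G=\Theta\circ\widetilde\Phi^{-1}:\cl T\to M_n$. By the open mapping theorem $\widetilde\Phi^{-1}$ is bounded, so $G$ is bounded, and a bounded map into $M_n$ is automatically completely bounded, whence $G\in\mathrm{CB}(\cl T,M_n)=M_n(\cl T^*)$. Writing $q:\cl S\to\cl R$ for the quotient map, $G\circ\Phi=\Theta\circ\widetilde\Phi^{-1}\circ\widetilde\Phi\circ q=\Theta\circ q$ is completely positive, so $(\Phi^*)_n(G)\in\mathrm{CP}(\cl S,M_n)$. On the other hand $G\circ\widetilde\Phi=\Theta$, so naturality of the pairing gives $\langle G,Q_0\rangle=\langle G,\widetilde\Phi_n([P_0])\rangle=\langle\Theta,[P_0]\rangle<0$ with $Q_0\in M_n(\cl T)^+$, which is impossible for a completely positive $G$. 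Hence $G\notin\mathrm{CP}(\cl T,M_n)$ and $\Phi^*$ is not order reflecting.

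The main obstacle is the converse: one must manufacture, from a single failure of liftability, a completely bounded $M_n$-valued map $G$ that becomes positive after composition with $\Phi$ yet is not positive itself. The two delicate points are (i) invoking the open mapping theorem to guarantee that $G$ is bounded, hence completely bounded into $M_n$ -- this is precisely where completeness of $\cl S$ and $\cl T$ is used -- and (ii) making the separation rigorous, namely checking that the relevant quotient cone is closed so that Hahn--Banach applies and that the separating positive functional does correspond to a completely positive map under the matrix-ordered duality.
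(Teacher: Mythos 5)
Your forward direction is essentially verbatim the paper's: lift $Q$ up to $\varepsilon$, apply $G\circ\Phi$, let $\varepsilon\to 0$ using closedness of $M_{mn}^+$. Your converse is also, at bottom, the paper's argument in different packaging. The paper introduces the cone $C_n\subset M_n(\cl T)^+$ of elements admitting $\varepsilon$-positive lifts, and this cone is exactly $\widetilde\Phi_n\bigl(M_n(\cl R)^+\bigr)$ for $\cl R=\cl S\slash\ker\Phi$; so your separation of $[P_0]$ from $M_n(\cl R)^+$ is the paper's separation of $y_0$ from $C_n$, transported through the linear bijection $\widetilde\Phi_n$. The trade-off is where the analytic burden lands: the paper must prove $C_n$ is norm-closed in the Banach space $M_n(\cl T)$ (which it does via the open mapping theorem applied to the Banach-space quotient), and in return its separating functional is automatically norm-continuous on $M_n(\cl T)$, i.e.\ automatically an element of $M_n(\cl T^*)$; you get closedness of $M_n(\cl R)^+$ for free from the Archimedean property, but must then show that the pulled-back map $G=\Theta\circ\widetilde\Phi^{-1}$ is bounded on $\cl T$.

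That boundedness step is where your proof has a genuine gap. The assertion ``by the open mapping theorem $\widetilde\Phi^{-1}$ is bounded, so $G$ is bounded'' conflates two different norms on $\cl R$. Your $\Theta$ is completely positive for the quotient \emph{operator system} structure, hence bounded only for the order-unit norm of $\cl R$; but the open mapping theorem requires completeness, which is known for the Banach-space (operator space) quotient norm and is \emph{not} known for the order-unit norm. The two norms genuinely differ in general --- this is precisely the distinction the paper stresses at the end of Section 2, citing \cite[Example 4.4]{KPTT2} --- and since the order-unit norm is dominated by the Banach quotient norm (because $q:\cl S\to\cl R$ is unital completely positive, hence contractive), completeness of the order-unit norm would force the two norms to be equivalent, which is not established anywhere. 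So the open mapping theorem, applied to $\widetilde\Phi$ with the operator system norm on $\cl R$, does not give what you need, and with the Banach quotient norm it gives boundedness of $\widetilde\Phi^{-1}$ into a space on which $\Theta$ is not yet known to be bounded. The repair is short: the domination $\|\cdot\|_{\rm osy}\le\|\cdot\|_{\rm osp}$ just noted shows $\Theta$ is also bounded for the Banach quotient norm, so you may run the open mapping theorem with that norm; or, bypassing the quotient entirely, note that $G\circ\Phi=\Theta\circ q$ is completely positive hence bounded on $\cl S$, and the open mapping theorem applied to the surjection $\Phi:\cl S\to\cl T$ of Banach spaces yields $M>0$ such that every $y\in\cl T$ has a lift $x$ with $\|x\|\le M\|y\|$, whence $\|G(y)\|=\|(\Theta\circ q)(x)\|\le M\,\|\Theta\circ q\|\,\|y\|$. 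With that repair (which is exactly the point where the paper's proof is more careful), your argument is correct.
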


\begin{proof}
$\Rightarrow )$ Let $\Phi_n^* (f) \in M_n(\cl S^*)^+ = CP(\cl S,
M_n)$ for $f \in M_n(\cl T^*)$. We choose a positive element $y$
in $M_m(\cl T)$. For any $\varepsilon>0$, there exists an element
$x$ in $M_m(\cl S)$ such that $$\Phi_m(x)=y \qquad \text{and}
\qquad x+\varepsilon I_m \otimes 1_{\cl S} \in M_m(\cl S)^+.$$ We
have $$(f \circ \Phi)_m(x) + \varepsilon (f \circ \Phi)_m(I_m
\otimes 1_{\cl S}) = (\Phi^*_n(f))_m(x + \varepsilon I_n \otimes
1_{\cl S}) \in M_{mn}^+.$$ Since the choice of $\varepsilon>0$ is
arbitrary, we have $$f_m(y) = (f \circ \Phi)_m(x) \in M_{mn}^+.$$
It follows that $f : \cl T \to M_n$ is completely positive.

\vskip 1pc

$\Leftarrow )$ We put $$C_n := \{ y \in M_n(\cl T) : \forall
\varepsilon >0, \exists x \in M_n(\cl S), x + \varepsilon I_n
\otimes 1_{\cl S} \in M_n(\cl S)^+ \ \text{and} \ \Phi_n(x)=y
\}.$$ For $y \in C_n$, we have $$y+\varepsilon I_n \otimes 1_{\cl
T} = \Phi_n (x + \varepsilon I_n \otimes 1_{\cl S}) \in M_n(\cl
T)^+,$$ thus $C_n \subset M_n(\cl T)^+$. The map $\Phi : \cl S \to
\cl T$ is a complete order quotient map if and only if $C_n =
M_n(\cl T)^+$ holds for all $n \in \mathbb N$. It is easy to check
that $C_n$ is a cone. We have the inclusions of the cones
$$\Phi_n(M_n(\cl S)^+) \subset C_n \subset M_n(\cl
T)^+.$$ Suppose that $y_k \in C_n$ converges to $y \in M_n(\cl
T)$. By the open mapping theorem, there exists $M>0$ such that $\|
{\widetilde \Phi}_n^{-1} : M_n (\cl T) \to M_n(\cl S) \slash {\rm
Ker} \Phi_n \| \le M$. We choose $y_{k_0}$ so that $\|y - y_{k_0}
\| < \varepsilon \slash M$. There exist $x, x'$ in $M_n(\cl S)$
such that
$$\Phi_n(x)=y_{k_0},\ x+\varepsilon I_n \otimes 1_{\cl S} \in
M_n(\cl S)^+ \quad \text{and} \quad \Phi_n(x') = y-y_{k_0},\
\|x'\| < \varepsilon.$$ Replacing $x'$ by $(x'+x'^*) \slash 2$, we
may assume that $x'$ is self-adjoint. It follows that $$y=y_{k_0}
+ (y-y_{k_0}) = \Phi_n(x+x') \quad \text{and} \quad
x+x'+2\varepsilon I_n \otimes 1_{\cl S} \in M_n(\cl S)^+.$$ Hence,
the cone $C_n$ is closed. We assume $C_n \subsetneq M_n(\cl T)^+$
and choose $y_0 \in M_n(\cl T)^+ \slash C_n$. By the Hahn-Banach
separation theorem, there exists a self-adjoint functional $f$ on
$M_n(\cl T)$ such that $f(C_n)=[0,\infty)$ and $f(y_0)<0$. Even
though the functional $f : M_n(\cl T) \to \mathbb C$ is not
positive, we have $\Phi_n^* (f)(x) = f \circ \Phi_n(x) \ge 0$ for
all $x \in M_n(\cl S)^+$ because $\Phi_n(M_n(\cl S)^+)$ is a
subcone of $C_n$. Hence, we see that the dual map $\Phi^* : \cl
T^* \to \cl S^*$ is not a complete order embedding.
\end{proof}

\begin{lem}\label{lemma}
Suppose that $f : \cl S \to M_n$ is a self-adjoint linear map for
an operator system $\cl S$. Then we have $f_m(x) \ge -I_{mn}$ for
all $m \in \mathbb N, x \in M_m(S)_1^+$ if and only if the
self-adjoint linear map $\widetilde f : \cl S \to M_n$ defined by
$$\widetilde f (x) = (f(1_{\cl S}) + 2 I_n)^{-{1 \over 2}} f(x) (f(1_{\cl S})
+ 2 I_n)^{-{1 \over 2}}$$ is completely contractive.
\end{lem}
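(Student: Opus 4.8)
The plan is to route both implications through a single reformulation: writing $B = f(1_{\cl S}) + 2I_n$, I want to show that $\widetilde f$ is completely contractive if and only if $-I_m \otimes B \le f_m(y) \le I_m \otimes B$ for every self-adjoint contraction $y \in M_m(\cl S)$ and every $m$, and then match this to the stated condition on $f$ via the affine substitution $y = I_m \otimes 1_{\cl S} - 2x$. First I would record that $B$ is positive and invertible: in the reverse direction this is implicit in the definition of $\widetilde f$ (the factor $B^{-1/2}$ must exist), while in the forward direction the hypothesis applied with $m=1$ and $x = 1_{\cl S}$ gives $f(1_{\cl S}) \ge -I_n$, hence $B \ge I_n$. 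Since $\widetilde f_m(y) = (I_m \otimes B^{-{1\over2}})\, f_m(y)\, (I_m \otimes B^{-{1\over2}})$, conjugating by the positive invertible operator $I_m \otimes B^{1\over2}$ shows, for self-adjoint $y$, that $\|\widetilde f_m(y)\| \le 1$ is exactly the two-sided estimate $-I_m \otimes B \le f_m(y) \le I_m \otimes B$.

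Next I would reduce complete contractivity to self-adjoint elements by the standard doubling device, which applies because $\widetilde f$ is self-adjoint. Given $y \in M_m(\cl S)$ with $\|y\| \le 1$, the element $\hat y = \left(\begin{smallmatrix} 0 & y \\ y^* & 0\end{smallmatrix}\right) \in M_{2m}(\cl S)$ is self-adjoint with $\|\hat y\| = \|y\|$, and self-adjointness of $\widetilde f$ gives $\widetilde f_{2m}(\hat y) = \left(\begin{smallmatrix} 0 & \widetilde f_m(y) \\ \widetilde f_m(y)^* & 0\end{smallmatrix}\right)$, whose norm equals $\|\widetilde f_m(y)\|$. Hence $\widetilde f$ is completely contractive if and only if $\|\widetilde f_m(y)\| \le 1$ for every self-adjoint contraction $y$ at every level $m$; combined with the previous paragraph, $\widetilde f$ is completely contractive if and only if $-I_m \otimes B \le f_m(y) \le I_m \otimes B$ for every self-adjoint contraction $y$.

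The last ingredient is the affine bijection $x \mapsto I_m \otimes 1_{\cl S} - 2x$ from the positive contractions $M_m(\cl S)_1^+$ onto the self-adjoint contractions $\{y = y^* : \|y\| \le 1\}$; indeed $0 \le x \le I_m \otimes 1_{\cl S}$ is equivalent to $-I_m \otimes 1_{\cl S} \le I_m \otimes 1_{\cl S} - 2x \le I_m \otimes 1_{\cl S}$. For the reverse implication I would take a positive contraction $x$, set $y = I_m \otimes 1_{\cl S} - 2x$, and use the upper half of the estimate: from $f_m(y) = I_m \otimes f(1_{\cl S}) - 2 f_m(x) \le I_m \otimes B = I_m \otimes f(1_{\cl S}) + 2I_{mn}$ the terms $I_m \otimes f(1_{\cl S})$ cancel and one gets $f_m(x) \ge -I_{mn}$. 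For the forward implication I would run this backwards: given the condition on $f$ and an arbitrary self-adjoint contraction $y$, the element $x = (I_m \otimes 1_{\cl S} - y)/2$ is a positive contraction, so $f_m(x) \ge -I_{mn}$ yields $f_m(y) \le I_m \otimes B$; replacing $y$ by $-y$ (again a self-adjoint contraction) gives $f_m(y) \ge -I_m \otimes B$, and this two-sided estimate combined with the first two paragraphs shows $\widetilde f$ is completely contractive.

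I expect the only genuinely delicate points to be the exact calibration of the substitution $y = I_m \otimes 1_{\cl S} - 2x$ — the factor $2$ is precisely what makes the shift $f(1_{\cl S})$ cancel against the $+2I_n$ built into $B$, converting the two-sided contraction estimate into the sharp one-sided bound $f_m(x) \ge -I_{mn}$ rather than a weaker multiple of it — and the passage from complete contractivity on arbitrary elements to complete contractivity on self-adjoint elements, for which the doubling trick above is the standard tool. Everything else is linear bookkeeping and conjugation by the positive invertible operator $I_m \otimes B^{1\over2}$.
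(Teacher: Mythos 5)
Your proof is correct, and it differs from the paper's only in how the forward implication is organized. Your reverse direction is the paper's computation in disguise: your substitution $y = I_m \otimes 1_{\cl S} - 2x$ equals $-2\bigl(x - \tfrac12 I_m \otimes 1_{\cl S}\bigr)$, and the paper works directly with $x - \tfrac12 I_m \otimes 1_{\cl S}$, which has norm at most $\tfrac12$; in both cases conjugation by $I_m \otimes (f(1_{\cl S})+2I_n)^{1/2}$ and cancellation of the $f(1_{\cl S})$ terms yield $f_m(x) \ge -I_{mn}$. For the forward implication, the paper avoids your two-step reduction (off-diagonal doubling $\bigl(\begin{smallmatrix} 0 & y \\ y^* & 0 \end{smallmatrix}\bigr)$ to pass to self-adjoint contractions, then the hypothesis applied twice, to $(I_m \otimes 1_{\cl S} \pm y)/2$, to get the sandwich $-I_m \otimes B \le f_m(y) \le I_m \otimes B$): it applies the hypothesis just once, at level $2m$, to the positive contraction $\tfrac12\bigl(\begin{smallmatrix} I_m \otimes 1_{\cl S} & x \\ x^* & I_m \otimes 1_{\cl S} \end{smallmatrix}\bigr)$ for an \emph{arbitrary} contraction $x$, obtaining the positive $2\times 2$ operator matrix with diagonal $I_m \otimes (f(1_{\cl S})+2I_n)$ and off-diagonal $f_m(x)$, and one conjugation by the inverse square root of the diagonal gives $\|\widetilde f_m(x)\| \le 1$ in a single stroke. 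So the paper's forward direction is shorter, while yours makes the order--norm dictionary for self-adjoint elements explicit and is the only one of the two to record the fact both arguments silently need, namely that $B = f(1_{\cl S}) + 2I_n \ge I_n$ is positive and invertible (from the hypothesis at $m=1$, $x = 1_{\cl S}$), so that $B^{-1/2}$ exists. Both arguments are elementary and complete.
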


\begin{proof}
$\Rightarrow )$ We choose a contractive element $x$ in
$M_m(\cl S)$. Then we have $$0 \le \begin{pmatrix} I_m \otimes 1_{\cl S} & x \\
x^* & I_m \otimes 1_{\cl S}
\end{pmatrix} \le 2 \begin{pmatrix} I_m \otimes 1_{\cl S} & 0 \\ 0 & I_m \otimes 1_{\cl S}
\end{pmatrix}.$$ By assumption, we have $$ 0 \le
\begin{pmatrix} I_m \otimes f(1_{\cl S}) +2
I_{mn} & f_m(x) \\ f_m(x)^* & I_m \otimes f(1_{\cl S}) +2 I_{mn}
\end{pmatrix}.$$ Multiplying both sides by $(I_m \otimes
f(1_{\cl S}) +2 I_{mn})^{-{1 \over 2}} \oplus (I_m \otimes
f(1_{\cl S}) +2 I_{mn})^{-{1 \over 2}}$ from the left and from the
right, we see that $\|{\widetilde f}_m (x)\| \le 1$.

\vskip 1pc

$\Leftarrow )$ We choose an element $x$ in $M_m(\cl S)^+_1$. Then
we have $$\|x-{1 \over 2} I_m \otimes 1_{\cl S} \| \le {1 \over
2}.$$ By assumption, we have
$$\begin{aligned} & \|(I_m \otimes f(1_{\cl S}) + 2
I_{mn})^{-{1 \over 2}} f_m(x-{1 \over 2} I_m \otimes 1_S) (I_m
\otimes f(1_{\cl S}) + 2 I_{mn})^{-{1 \over 2}}\| \\ = &
\|{\widetilde f}_m(x-{1 \over 2} I_m \otimes 1_{\cl S}) \|
\\ \le & {1 \over 2}, \end{aligned}$$ thus $$-{1 \over 2} (I_m \otimes f(1_{\cl S}) + 2
I_{mn}) \le f_m(x-{1 \over 2} I_m \otimes 1_{\cl S}).$$ It follows
that $f_m(x) \ge -I_{mn}$.
\end{proof}

\begin{lem} \label{extension}
Let $\cl S$ be an operator subsystem of an operator system $\cl T$
and $A$ a positive semidefinite $n \times n$ matrix. Suppose that
$f : \cl S \to M_n$ is a self-adjoint linear map satisfying
$f_m(x) \ge - I_m \otimes A$ for all $m \in \mathbb N, x \in
M_m(\cl S)_1^+$. Then $f : \cl S \to M_n$ extends to a
self-adjoint linear map $F : \cl T \to M_n$ satisfying $F_m(x) \ge
-I_m \otimes A$ for all $m \in \mathbb N, x \in M_m(\cl T)_1^+$.
\end{lem}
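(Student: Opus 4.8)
Let me plan the proof.

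The plan is to reduce to the normalized case $A = I_n$, where Lemma~\ref{lemma} converts the hypothesis into a statement about complete contractivity so that the Arveson--Wittstock extension theorem applies, and then to recover the general case by a congruence transformation together with a perturbation argument.

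First I would treat $A = I_n$. Here the hypothesis $f_m(x) \ge -I_{mn}$ for all $m$ and all $x \in M_m(\cl S)_1^+$ is, by Lemma~\ref{lemma}, equivalent to the complete contractivity of the map $\widetilde f(x) = (f(1_{\cl S}) + 2I_n)^{-{1 \over 2}} f(x)(f(1_{\cl S})+2I_n)^{-{1 \over 2}}$; note that testing the hypothesis at $m=1$, $x = 1_{\cl S}$ gives $f(1_{\cl S}) \ge -I_n$, so $f(1_{\cl S})+2I_n$ is invertible. Viewing $\cl S \subset \cl T$ as an inclusion of operator spaces and $M_n = B(\ell^2_n)$, Wittstock's extension theorem produces a completely contractive $\widetilde F : \cl T \to M_n$ extending $\widetilde f$. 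Replacing $\widetilde F$ by $\tfrac12(\widetilde F + \widetilde F^\sharp)$, where $\widetilde F^\sharp(x) = \widetilde F(x^*)^*$ is again a completely contractive extension of the self-adjoint map $\widetilde f$, I may assume $\widetilde F$ is self-adjoint. Undoing the conjugation, $F(x) := (f(1_{\cl S})+2I_n)^{1 \over 2}\, \widetilde F(x)\, (f(1_{\cl S})+2I_n)^{1 \over 2}$ is a self-adjoint extension of $f$ with $F(1_{\cl T}) = f(1_{\cl S})$, so Lemma~\ref{lemma} applied over $\cl T$ yields $F_m(x) \ge -I_{mn}$ for all $m$ and all $x \in M_m(\cl T)_1^+$, as required.

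Next, for invertible $A$ I would conjugate: the map $g := A^{-{1 \over 2}} f(\cdot) A^{-{1 \over 2}}$ satisfies $g_m(x) \ge -I_{mn}$, so the case above yields a self-adjoint extension $G : \cl T \to M_n$ with $G_m(x) \ge -I_{mn}$, whereupon $F := A^{1 \over 2} G(\cdot) A^{1 \over 2}$ extends $f$ and satisfies $F_m(x) \ge -I_m \otimes A$.

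The remaining point, which I expect to be the main obstacle, is a genuinely singular $A$, where no such conjugation is available. Here I would apply the invertible case to $A_\varepsilon := A + \varepsilon I_n$ (since $A \le A_\varepsilon$, the hypothesis for $A$ forces it for $A_\varepsilon$), obtaining for each $\varepsilon \in (0,1]$ a self-adjoint extension $F_\varepsilon : \cl T \to M_n$ with $F_\varepsilon(1_{\cl T}) = f(1_{\cl S})$ and $(F_\varepsilon)_m(x) \ge -I_m \otimes A_\varepsilon$ on $M_m(\cl T)_1^+$. Testing this condition on both $x$ and $I_m \otimes 1_{\cl T} - x$ sandwiches $(F_\varepsilon)_m$ between $-I_m \otimes A_\varepsilon$ and $I_m \otimes (f(1_{\cl S}) + A_\varepsilon)$ on positive contractions, which bounds $\|F_\varepsilon\|_{\cl T \to M_n}$ uniformly in $\varepsilon$. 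Since $M_n$ is finite dimensional, a Banach--Alaoglu (equivalently Tychonoff) argument lets me pass to a subnet with $\varepsilon \to 0$ converging pointwise to a self-adjoint linear $F : \cl T \to M_n$; it extends $f$ because every $F_\varepsilon$ does, and passing to the limit in $(F_\varepsilon)_m(x) \ge -I_m \otimes A_\varepsilon$ gives $F_m(x) \ge -I_m \otimes A$. The subtlety to check carefully is that the pointwise limit preserves the matricial inequality for all $m$ simultaneously, which holds because for each fixed $m$ and $x$ the amplification is continuous and $A_\varepsilon \to A$.
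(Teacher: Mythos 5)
Your proof is correct and follows essentially the same route as the paper: conjugate by $(A+\varepsilon I_n)^{-1/2}$, use Lemma \ref{lemma} to translate the order hypothesis into complete contractivity, extend by the Wittstock theorem, symmetrize and undo the conjugation, and pass to a point-norm cluster point as $\varepsilon \to 0$ using finite-dimensionality of $M_n$. The only cosmetic differences are your modular case structure (first $A=I_n$, then invertible $A$, then singular $A$) and your order-theoretic derivation of the uniform norm bound (testing on both $x$ and $I_m \otimes 1_{\cl T} - x$), where the paper instead reads the bound $\|F_\varepsilon\| \le \|f(1_{\cl S})+2A\|+2$ directly off the construction.
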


\begin{proof}
We define a self-adjoint linear map $g : \cl S \to M_n$ by
$$g(x) = (A+\varepsilon I_n)^{-{1 \over 2}}f(x)(A+\varepsilon
I_n)^{-{1 \over 2}}$$ for $0 < \varepsilon < 1$. Then we have
$$g_m(x) \ge -(I_m \otimes A +\varepsilon I_{mn})^{-{1 \over
2}}(I_m \otimes A) (I_m \otimes A +\varepsilon I_{mn})^{-{1 \over
2}} \ge -I_{mn}$$ for all $m \in \mathbb N, x \in M_m(\cl S)_1^+$.
By Lemma \ref{lemma}, the self-adjoint linear map $\widetilde g :
\cl S \to M_n$ defined by $$\widetilde g (x) = (g(1_{\cl S})+2
I_n)^{-{1 \over 2}} g(x) (g(1_{\cl S})+2 I_n)^{-{1 \over 2}}$$ is
completely contractive. By the Wittstock extension theorem,
$\widetilde g$ extends to a complete contraction $\widetilde G :
\cl T \to M_n$. By considering ${1 \over 2} (\widetilde
G+{\widetilde G}^*)$ instead, we may assume that $\widetilde G$ is
self-adjoint. We put $$G(x) = (g(1_{\cl S})+2 I_n)^{1 \over 2}
\widetilde G(x)(g(1_{\cl S})+2 I_n)^{1 \over 2}$$ and
$$F(x)=(A+\varepsilon I_n)^{1 \over 2} G(x)(A + \varepsilon
I_n)^{1 \over 2}.$$ Then $F : \cl T \to M_n$ (respectively, $G :
\cl T \to M_n$) is a self-adjoint extension of $f : \cl S \to M_n$
(respectively, $g : \cl S \to M_n$). The self-adjoint linear map
$\widetilde G$ can be written as
$$\widetilde G(x) = (G(1_{\cl S})+2 I_n)^{-{1 \over 2}} G(x) (G(1_{\cl S})+2
I_n)^{-{1 \over 2}}.$$ By using Lemma \ref{lemma} again, we see
that
$$F_m(x) = (I_m \otimes A+\varepsilon I_{mn})^{1 \over 2} G_m(x)(I_m \otimes A+\varepsilon
I_{mn})^{1 \over 2} \ge - I_m \otimes A - \varepsilon I_{mn}$$ for
all $m \in \mathbb N, x \in M_m(\cl T)_1^+$. The extension $F :
\cl T \to M_n$ depends on the choice of $\varepsilon$. However the
norm of $F$ is uniformly bounded as can be seen from
$$\begin{aligned} \|F\| & \le \|(A+\varepsilon I_n)^{1 \over
2}((A+\varepsilon I_n)^{-{1 \over 2}}f(1_{\cl
S})(A+\varepsilon I_n)^{-{1 \over 2}} + 2I_n)^{1 \over 2} \|^2 \\
& = \|f(1_{\cl S}) + 2(A+\varepsilon I_n)\| \\ & \le \|f(1_{\cl
S}) + 2A \| +2.\end{aligned}$$ Since the range space is finite
dimensional, we can consider the point-norm cluster point of $\{
F_\varepsilon : 0< \varepsilon <1 \}$.
\end{proof}

Let $T : V \to W$ be a completely contractive and completely
positive map for matrix ordered operator spaces $V$ and $W$. Then
its unitization $\widetilde{T} : \widetilde{V} \to \widetilde{W}$
defined by
$$\widetilde{T}(x + \lambda 1_{\widetilde{V}}) = T(x) + \lambda
1_{\widetilde{W}}, \qquad x \in V, \lambda \in \bb C$$ is a unital
completely positive map \cite[Lemma 4.9]{W}.

\begin{thm}
Suppose that $\Phi : \cl S \to \cl T$ is a unital completely
positive map for operator systems $\cl S$ and $\cl T$. Then $\Phi
: \cl S \to \cl T$ is a complete order embedding if and only if
the unitization of its dual map $\widetilde{\Phi^*} :
\widetilde{\cl T^*} \to \widetilde{\cl S^*}$ is a complete order
quotient map.
\end{thm}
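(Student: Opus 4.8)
The plan is to set up, via Lemma~\ref{unitization}, a dictionary between positivity in the unitized duals and the approximate bounds $f_m(x) \ge -I_m \otimes A$, and then to recognize the complete order quotient condition for $\widetilde{\Phi^*}$ as exactly the extension statement of Lemma~\ref{extension}. First I would record the standing facts: since $\Phi$ is unital completely positive it is completely contractive, hence $\Phi^* : \cl T^* \to \cl S^*$ is completely positive and completely contractive, and so $\widetilde{\Phi^*}$ is unital completely positive by \cite[Lemma 4.9]{W}. Writing a general element of $M_n(\widetilde{\cl T^*})$ as $h + B$ with $h : \cl T \to M_n$ self-adjoint and $B \in M_n$, the map acts by $\widetilde{\Phi^*}_n(h+B) = (h \circ \Phi) + B$, so matching the (unique) $\cl T^*$-summand and the scalar summand turns the lifting equation $\widetilde{\Phi^*}_n(P) = Q$ into the two conditions that $h$ extend $g$ through $\Phi$ and that the scalar parts agree.

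For the forward direction I would identify $\cl S$ with the operator subsystem $\Phi(\cl S) \subset \cl T$; as a unital complete order embedding is a complete isometry, $\Phi_m$ carries $M_m(\cl S)_1^+$ bijectively onto $M_m(\Phi(\cl S))_1^+$. Given $Q = g + A \in M_n(\widetilde{\cl S^*})^+$ with $A \in M_n^+$ and $g$ self-adjoint, Lemma~\ref{unitization} yields $g_m(x) \ge -I_m \otimes A$ for all $x \in M_m(\cl S)_1^+$; transporting $g$ to $\Phi(\cl S)$ and applying Lemma~\ref{extension} produces a self-adjoint $h : \cl T \to M_n$ with $h \circ \Phi = g$ and $h_m(y) \ge -I_m \otimes A$ for all $y \in M_m(\cl T)_1^+$. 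Then $P := h + A$ satisfies $\widetilde{\Phi^*}_n(P) = Q$ and, by Lemma~\ref{unitization} again, $P \in M_n(\widetilde{\cl T^*})^+$, so $P + \varepsilon I_n \otimes 1_{\widetilde{\cl T^*}}$ is positive for every $\varepsilon > 0$. Together with surjectivity of $\widetilde{\Phi^*}$, which follows from the Hahn--Banach extension of functionals on the subsystem $\Phi(\cl S)$, this shows $\widetilde{\Phi^*}$ is a complete order quotient map.

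For the converse I would test positivity of $\Phi$ against compressions. Recall that $x \in M_m(\cl S)^+$ as soon as $g_m(x) \in M_{mn}^+$ for every completely positive $g : \cl S \to M_n$: realize $\cl S \subset B(H)$ and use the finite dimensional compressions $P \cdot P$, exactly as in the proof of Theorem~\ref{WEP}. So fix such a $g$ and an $x \in M_m(\cl S)_{sa}$ with $\Phi_m(x) \in M_m(\cl T)^+$. Viewing $g$ as the element $g + 0 \in M_n(\widetilde{\cl S^*})^+$ and applying the complete order quotient property to $Q = g$ and $\varepsilon > 0$, I obtain $P = h + B$ with $P + \varepsilon I_n \otimes 1 \in M_n(\widetilde{\cl T^*})^+$ and $\widetilde{\Phi^*}_n(P) = g$; matching summands forces $B = 0$ and $h \circ \Phi = g$, while Lemma~\ref{unitization} gives $h_m(y) \ge -\varepsilon I_{mn}$ for all $y \in M_m(\cl T)_1^+$. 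Putting $y = \Phi_m(x)/\|\Phi_m(x)\|$ gives $g_m(x) = h_m(\Phi_m(x)) \ge -\varepsilon \|\Phi_m(x)\| I_{mn}$; letting $\varepsilon \to 0$ yields $g_m(x) \ge 0$, and since $g$ was an arbitrary completely positive map into a matrix algebra, $x \in M_m(\cl S)^+$. Hence $\Phi$ is a complete order embedding.

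I expect the main obstacle to be the forward direction, where the whole force of the argument is concentrated in Lemma~\ref{extension} (itself resting on the Wittstock extension theorem). The delicate point is that the complete order quotient condition only asks for a lift of $Q$ that is positive up to $\varepsilon$, whereas the extension lemma in fact supplies a genuine lift preserving the bound $A$; so the verification becomes clean once the scalar summand of the unitization is tracked correctly and the correspondence $M_m(\cl S)_1^+ \cong M_m(\Phi(\cl S))_1^+$ is in place. The converse is comparatively routine given Lemma~\ref{unitization} and the compression characterization of positivity.
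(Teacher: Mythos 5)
Your proposal is correct and follows essentially the same route as the paper: the forward direction is precisely Lemma~\ref{unitization} plus Lemma~\ref{extension} producing exact positive lifts of $f+A$, and the converse tests the $\varepsilon$-approximate lifts against completely positive maps into matrix algebras and lets $\varepsilon \to 0$ (the paper phrases this with positive functionals on $M_n(\cl S)$, which correspond canonically to such maps, and you are in fact slightly more careful than the paper in checking surjectivity of $\widetilde{\Phi^*}$). The only point you leave implicit is injectivity of $\Phi$, needed for it to be an embedding, which follows at once from your positivity reflection: $\Phi_n(x)=0$ gives $\pm x \in M_n(\cl S)^+$, hence $x=0$.
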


\begin{proof}
$\Rightarrow )$ Let $f+A \in M_n(\widetilde{\cl S^*})^+$. By Lemma
\ref{unitization}, we have $f_m(x) \ge -I_m \otimes A$ for all $m
\in \mathbb N$ and $x \in M_m(\cl S)_1^+$. We can regard $\cl S$
as an operator subsystem of $\cl T$. By Lemma \ref{extension},
there exists a self-adjoint extension $F : \cl T \to M_n$ such
that $F_m(x) \ge -I_m \otimes A$ for all $m \in \mathbb N$ and $x
\in M_m(\cl T)_1^+$. By Lemma \ref{unitization} again, we have
$$\Phi_n^*(F+A) = f+A \qquad \text{and} \qquad F+A \in
M_n(\widetilde{\cl T^*})^+.$$

$\Leftarrow )$ Suppose that $\Phi_n(x)$ belongs to $M_n(\cl T)^+$
for $x \in M_n(\cl S)$. Let $f : M_n(\cl S) \to \bb C$ be a
positive functional. For any $\varepsilon>0$, there exists a
self-adjoint linear functional $F : M_n(\cl T) \to \bb C$ such
that
$$\Phi^*_n(F)=f \qquad \text{and} \qquad F + \varepsilon I_n \otimes
1_{\widetilde{\cl T^*}} \in M_n(\widetilde{\cl T^*})^+.$$ We have
$$f(x) = F \circ \Phi(x) \ge -\varepsilon n^2 \|x\|.$$ It follows that $x \in
M_n(\cl S)^+$. If $\Phi_n(x) = 0$, then $x \in M_n(\cl S)^+ \cap
-M_n(\cl S)^+ = \{ 0 \}$.
\end{proof}


\end{document}